
\documentclass[letterpaper, 10 pt, conference]{ieeeconf}  

\IEEEoverridecommandlockouts                              
\overrideIEEEmargins


\usepackage{amsmath}
\usepackage{amssymb}  
\usepackage{amsthm_custom}  
\usepackage[short, c2, nocomma]{optidef_custom}
\usepackage{bm}
\usepackage{comment}
\usepackage[hidelinks]{hyperref}
\usepackage{makecell}
\usepackage{subdepth}
\usepackage{mathtools}
\usepackage{caption}
\usepackage[labelformat=simple]{subcaption}

\usepackage[english]{babel}
\usepackage{blindtext}

\usepackage{pgfplots}
\usetikzlibrary{arrows.meta}
\usetikzlibrary{calc}
\usepgfplotslibrary{groupplots}
\usepgfplotslibrary{dateplot}
\usetikzlibrary{backgrounds, plotmarks}

\usepackage{mycustompgf}


\newcommand{\ddt}{\frac{\mathrm{d}}{\mathrm{d}t}}

\newcommand{\ntr}{n_{\mathrm{tr}}}
\newcommand{\dntr}{\dot{n}_{\mathrm{tr}}}
\newcommand{\lamn}{\lambda_{\mathrm{n}}}

\newcommand{\nha}{n_{1,\mathrm{h}}}
\newcommand{\nhb}{n_{2,\mathrm{h}}}

\newtheorem{proposition}{Proposition}
\newtheorem{example}{Example}
\newtheorem{lemma}{Lemma}
\newtheorem{remark}{Remark}

\makeatletter
\newlength{\negph@wd}
\DeclareRobustCommand{\negphantom}[1]{%
	\ifmmode
	\mathpalette\negph@math{#1}%
	\else
	\negph@do{#1}%
	\fi
}
\newcommand{\negph@math}[2]{\negph@do{$\m@th#1#2$}}
\newcommand{\negph@do}[1]{%
	\settowidth{\negph@wd}{#1}%
	\hspace*{\negph@wd}
}
\makeatother

\usepackage[nocompress]{cite}

\title{\LARGE \bf
High Accuracy Numerical Optimal Control for Rigid Bodies with Patch Contacts through Equivalent Contact Points -- Extended Version
}

\date{}
\author{Christian Dietz${}^{1,2}$, Armin Nurkanovi\'c${}^{2}$, Sebastian Albrecht${}^{1}$, Moritz Diehl${}^{2,3}$
	\thanks{\noindent \hspace*{-1.06em}${}^{1}$Autonomous Systems and Control, Siemens Technology, Germany \newline
		${}^{2}$Department of Microsystems Engineering (IMTEK), University of Freiburg, Germany \newline
		${}^{3}$Department of Mathematics, University of Freiburg, Germany \newline 
		Correspondent: {\tt\small dietz.christian@siemens.com} \newline
		This research was supported by BMWK via 20D2123B, 03EI4057A and 03EN3054B, by DFG via Research Unit FOR 2401, project 424107692 and 525018088, and by the EU via ELO-X 953348.
		}}%

\begin{document}

\maketitle
\thispagestyle{empty}
\pagestyle{empty}

\begin{abstract}
This paper extends the Finite Elements with Switch Detection and Jumps (FESD-J) \cite{fesdj} method to problems of rigid body dynamics involving patch contacts. The \mbox{FESD-J} method is a high accuracy discretization scheme suitable for use in direct optimal control of nonsmooth mechanical systems. It detects dynamic switches exactly in time and, thereby, maintains the integration order of the underlying Runge-Kutta (RK) method. This is in contrast to commonly used time-stepping methods which only achieve first-order accuracy. Considering rigid bodies with possible patch contacts results in nondifferentiable signed distance functions (SDF), which introduces additional nonsmoothness into the dynamical system. In this work, we utilize so-called equivalent contact points (ECP), which parameterize force and impulse distributions on contact patches by evaluation at single points. We embed a nondifferentiable SDF into a complementarity Lagrangian system (CLS) and show that the determined ECP are well-defined. We then extend the FESD-J discretization to the considered CLS such that its integration accuracy is maintained. The functionality of the method is illustrated for both a simulation and an optimal control example.
\end{abstract}

\section{INTRODUCTION}
The use of optimal control methods is advantageous for many robotic tasks as system dynamics and constraints can be easily integrated and resulting control inputs fulfill predefined optimality criteria. Applications such as locomotion and manipulation can be captured by complementarity \mbox{Lagrangian} systems (CLS) with state jumps \cite{brogliato2016}, which consider standard Newton-Euler equations of rigid body dynamics together with complementarity conditions for force interactions and impact laws. Since impacts and contact forces result in jumps and kinks of system velocities, these CLS fall into the class of nonsmooth dynamical systems. This makes using standard methods applied in direct optimal control difficult, as integration of dynamical systems is usually accomplished by utilizing high-order integration methods such as Runge-Kutta (RK) schemes across fixed time intervals, which for nonsmooth systems only achieves first-order accuracy \cite{acary}. A large part of the existing work on direct optimal control for nonsmooth systems relies on \mbox{(semi-)implicit} Euler time-stepping methods \cite{posa, xie}. These methods do not aim to detect switches in the system exactly but rather integrate forces and impulses simultaneously over fixed time intervals. Thereby, they produce wrong numerical sensitivities and introduce artificial minima in which the optimizer can get stuck \cite{stewart, zhong22}.

Recently, the Finite Elements with Switch Detection and Jumps (FESD-J) \cite{fesdj} method was introduced, which is an event-based discretization scheme for nonsmooth systems suitable for use in direct optimal control. FESD-J allows to detect switches in the dynamical system and state jumps exactly in time. Thereby, the integration order of an underlying RK method is recovered. The method was introduced, however, for the case where signed distance functions (SDF) are sufficiently smooth.

When considering problems of rigid body dynamics originating from real-world applications, in many cases objects interacting with each other admit to contact surfaces that are not singletons. A patch contact occurs if there are multiple distinct points that are simultaneously contained in the surfaces of two considered objects. To derive rigid body dynamic models of such systems one has to consider SDF between the involved objects, which are for the case of possible patch contacts nondifferentiable. This introduces additional nonsmoothness into the system and thereby provides further challenges for numerical treatment. In \cite{xie} a nondifferentiable SDF was integrated in a time-stepping method that allows to simulate the dynamical evolution of polytopic bodies making contact. The concept of an equivalent contact point (ECP) was introduced, which is a unique point on the contact patch such that force acting on this point is equivalent to a force distribution acting across the full contact patch. It was shown that for given discrete time position and velocity solutions of the time-stepping discretization, the corresponding ECP and force magnitudes are unique.

If one uses an event-based discretization method, impulses across patch contacts are simulated in an isolated manner. This is contrary to time-stepping methods as used \mbox{in \cite{xie}}, which treat impulses and forces simultaneously. Since impulses are complex microscopic phenomena that heavily depend on material properties, in the field of optimal control one often assumes that involved bodies are perfectly rigid and then uses a phenomenological model for impact \mbox{resolution \cite{brogliato2016}}. When dealing with patch contacts, one has to resolve multiple simultaneous impacts at once, as the velocity of all points on the contact patch needs to undergo a jump to prevent penetration of bodies in the next time instant. Treating simultaneous impacts requires additional modeling \mbox{choices \cite{nguyen}}.

\subsection{Contribution}
In this work, we embed a nondifferentiable SDF modeled as a convex optimization problem into a CLS with state jumps to obtain a continuous time model for rigid body dynamics of objects with possible patch contacts. In particular, we focus on the frictionless case and consider padded polytopes, which are obtained by sweeping standard polytopes with a sphere of arbitrarily small radius. This modeling decision is taken to avoid impacts between sharp corners, which require special treatment \cite{glocker2006}. We then propose a multi-impact law based on the considered SDF and show that for polytopic contact patches this law is equivalent to Moreau's impact law \cite{moreau}. We argue that the CLS is well-defined, i.e., that it produces a unique evolution of state trajectories (Section \ref{seccls}).

The FESD-J method is then extended to our model, such that the integration order is maintained and the considered impact law is reflected in the discretization. To this end, we have to introduce additional so-called cross complementarity conditions to detect switches in the nondifferentiable SDF exactly in time (Section \ref{secfesd}).

The functionality of the model and discretization is illustrated in two numerical examples, namely the simulation of a falling cuboid making contact with its environment and the optimal control of a planar manipulation task (Section \ref{secnumex}).

\subsection{Notation}
Left- and right-hand side limits of a function at some evaluation time $t_{\mathrm{s}}$ are denoted by $x(t_{\mathrm{s}}^{+}) = \displaystyle\lim_{t\rightarrow t_{\mathrm{s}}, t> t_{\mathrm{s}}} x(t)$ and $x(t_{\mathrm{s}}^{-}) = \displaystyle\lim_{t\rightarrow t_{\mathrm{s}}, t< t_{\mathrm{s}}} x(t)$, respectively. For ease of notation, we drop time dependencies of differential and algebraic variables when it is clear from the context. The complementarity condition for two vectors $a, b\in \mathbb{R}^{n}$ is denoted by \mbox{$ 0 \leq a  \perp  b \geq 0$}, where $ a  \perp  b$ means $ a ^{\top} b = 0$. We denote the rotation matrix in two dimensions for some $\theta \in \mathbb{R}$ by
\begin{equation*}
	R(\theta) = \begin{pmatrix}
		\cos(\theta) & -\sin{\theta} \\
		\sin(\theta) & \cos{\theta}
	\end{pmatrix}.
\end{equation*}
The cross product for two vectors $a, b\in \mathbb{R}^{2}$ is given by
$a \times b = a^{\top} R\left(-\frac{\pi}{2}\right)b = a_1 b_2 - a_2b_1.$ Given a vector $a \in \mathbb{R}^{n}$ and a set of indices $\mathcal{I} \subset \{1,\dots,n\}$, $a_{\mathcal{I}} \in \mathbb{R}^{\vert\mathcal{I} \vert}$ is the vector only containing the components of $a$ with indices in $\mathcal{I}$.
\section{RIGID BODY DYNAMICS WITH PATCH CONTACTS} \label{seccls}

We consider CLS with state jumps that arise in nonsmooth mechanics for rigid bodies with inelastic impacts. In particular, we consider the case where these bodies are modeled by padded polytopes. This is a challenging problem since SDF between such objects are not everywhere differentiable, which introduces additional nonsmoothness into the system.

In the following, we first express a SDF between padded polytopes as an optimization problem and discuss properties of resulting primal and dual variables in Section \ref{section1a}. In Section \ref{section1b}, we state the considered CLS and discuss how to obtain contact normal vectors that specify the direction along which forces and impulses act on the system. We then utilize the SDF to formulate a simultaneous impact law and relate our formulation to Moreau's impact law \cite{moreau} in Section \ref{section1imp}. Finally, in Section \ref{section1unique}, we show that the considered CLS produces a unique evolution of state trajectories. In particular, if objects make patch contacts, there is a unique ECP chosen to parameterize the contact force interaction. 

\begin{figure*}
	\centering
	\begin{subfigure}{.32\linewidth} 
		\centering 
\begin{tikzpicture}


\definecolor{darkgray176}{RGB}{176,176,176}

\begin{axis}[
width=0.7\textwidth,
height=0.466\textwidth,
scale only axis=true,
tick align=outside,
tick pos=left,
x grid style={darkgray176},
xmin=-3, xmax=3,
xtick style={color=black},
y grid style={darkgray176},
ymin=-2, ymax=2,
ytick style={color=black},
ylabel= {\(\displaystyle c_{\mathrm{y}}\)},
xlabel= {\(\displaystyle c_{\mathrm{x}}\)},
x label style={at={(axis description cs:0.5,-0.17)},anchor=north},
y label style={at={(axis description cs:-0.12,.5)}, anchor=south},
label style={font=\scriptsize},
tick label style={font=\scriptsize},
]
\path [draw=black, fill=mediumorange]
(axis cs:2.8,-0.3)
--(axis cs:2.8,-1.5)
.. controls (axis cs:2.8,-1.76666666666667) and (axis cs:2.66666666666667,-1.9) .. (axis cs:2.4,-1.9)
--(axis cs:1.2,-1.9)
.. controls (axis cs:0.933333333333333,-1.9) and (axis cs:0.8,-1.76666666666667) .. (axis cs:0.8,-1.5)
--(axis cs:0.8,-0.3)
.. controls (axis cs:0.8,-0.0333333333333335) and (axis cs:0.933333333333333,0.0999999999999999) .. (axis cs:1.2,0.0999999999999999)
--(axis cs:2.4,0.1)
.. controls (axis cs:2.66666666666667,0.1) and (axis cs:2.8,-0.0333333333333334) .. (axis cs:2.8,-0.3);
\path [draw=black, fill=mediumcyan]
(axis cs:-2.4,1.9)
--(axis cs:-1.2,1.9)
.. controls (axis cs:-0.933333333333333,1.9) and (axis cs:-0.8,1.76666666666667) .. (axis cs:-0.8,1.5)
--(axis cs:-0.8,0.3)
.. controls (axis cs:-0.8,0.0333333333333334) and (axis cs:-0.933333333333333,-0.1) .. (axis cs:-1.2,-0.1)
--(axis cs:-2.4,-0.1)
.. controls (axis cs:-2.66666666666667,-0.1) and (axis cs:-2.8,0.0333333333333334) .. (axis cs:-2.8,0.3)
--(axis cs:-2.8,1.5)
.. controls (axis cs:-2.8,1.76666666666667) and (axis cs:-2.66666666666667,1.9) .. (axis cs:-2.4,1.9);
\path [draw=black, dotted]
(axis cs:-1.2,1.5)
--(axis cs:-1.2,0.3)
--(axis cs:-2.4,0.3)
--(axis cs:-2.4,1.5)
--cycle;
\path [draw=black, dotted]
(axis cs:2.4,-1.5)
--(axis cs:1.2,-1.5)
--(axis cs:1.2,-0.3)
--(axis cs:2.4,-0.3)
--cycle;
\draw[draw=black, dashed] (axis cs:1.2,-0.3) circle (0.4);
\draw[draw=black, dashed] (axis cs:-1.2,0.3) circle (0.4);
\path [draw=black, fill=mediumorange, opacity=0.4]
(axis cs:3.60000003008182,0.500000030081823)
--(axis cs:3.60000003008182,-2.30000003008182)
.. controls (axis cs:3.60000003008182,-2.56666669674849) and (axis cs:3.46666669674849,-2.70000003008182) .. (axis cs:3.20000003008182,-2.70000003008182)
--(axis cs:0.399999969918177,-2.70000003008182)
.. controls (axis cs:0.133333303251511,-2.70000003008182) and (axis cs:-3.00818225884569e-08,-2.56666669674849) .. (axis cs:-3.00818225884569e-08,-2.30000003008182)
--(axis cs:-3.00818225884569e-08,0.500000030081823)
.. controls (axis cs:-3.00818225884569e-08,0.766666696748489) and (axis cs:0.133333303251511,0.900000030081822) .. (axis cs:0.399999969918177,0.900000030081823)
--(axis cs:3.20000003008182,0.900000030081823)
.. controls (axis cs:3.46666669674849,0.900000030081822) and (axis cs:3.60000003008182,0.766666696748489) .. (axis cs:3.60000003008182,0.500000030081823);
\path [draw=black, fill=mediumcyan, opacity=0.4]
(axis cs:-3.20000003008182,2.70000003008182)
--(axis cs:-0.399999969918177,2.70000003008182)
.. controls (axis cs:-0.133333303251511,2.70000003008182) and (axis cs:3.00818225884569e-08,2.56666669674849) .. (axis cs:3.00818225884569e-08,2.30000003008182)
--(axis cs:3.00818225884569e-08,-0.500000030081823)
.. controls (axis cs:3.00818225884569e-08,-0.766666696748489) and (axis cs:-0.133333303251511,-0.900000030081822) .. (axis cs:-0.399999969918177,-0.900000030081823)
--(axis cs:-3.20000003008182,-0.900000030081823)
.. controls (axis cs:-3.46666669674849,-0.900000030081822) and (axis cs:-3.60000003008182,-0.766666696748489) .. (axis cs:-3.60000003008182,-0.500000030081823)
--(axis cs:-3.60000003008182,2.30000003008182)
.. controls (axis cs:-3.60000003008182,2.56666669674849) and (axis cs:-3.46666669674849,2.70000003008182) .. (axis cs:-3.20000003008182,2.70000003008182);
\path [draw=black, draw opacity=0.4, dotted]
(axis cs:3.20000003008182,-2.30000003008182)
--(axis cs:0.399999969918177,-2.30000003008182)
--(axis cs:0.399999969918177,0.500000030081823)
--(axis cs:3.20000003008182,0.500000030081823)
--cycle;
\path [draw=black, draw opacity=0.4, dotted]
(axis cs:-0.399999969918177,2.30000003008182)
--(axis cs:-0.399999969918177,-0.500000030081823)
--(axis cs:-3.20000003008182,-0.500000030081823)
--(axis cs:-3.20000003008182,2.30000003008182)
--cycle;
\draw[draw=black,draw opacity=0.4, dashed] (axis cs:0.399999969918177,0.500000030081823) circle (0.4);
\draw[draw=black,draw opacity=0.4, dashed] (axis cs:-0.399999969918177,-0.500000030081823) circle (0.4);

\draw[semithick, line width=1.5pt, mediumgreen,arrows={-Triangle[scale=0.8]}]
(0,-1.5) -- (-0.8,-1.5);
\draw (axis cs:-0.97,-1.18) node[
  anchor= west,
  text=mediumgreen,
  rotate=0.0
]{$n_{\mathrm{tr}}$};

\draw [semithick, mediumgreen, line width=1.5pt, dashed] (0,3) -- (0,-3);

\addplot [semithick, black, mark=*, mark size=1.5, mark options={solid}]
table {%
1.8 -0.9
};

\addplot [semithick, black, mark=*, mark size=1.5, mark options={solid}]
table {%
-1.8 0.9
};

\draw (axis cs:-1.8, 0.9) node[
  anchor=south,
  text=black,
  rotate=0.0
]{$c_1$};

\draw (axis cs:1.8, -0.9) node[
  anchor=south,
  text=black,
  rotate=0.0
]{$c_2$};

\addplot [semithick, mediumred, line width=1.5pt]
table {%
0.399999969918177 -0.500000030081823
0.399999969918177 0.500000030081823
};
\addplot [semithick, mediumred, line width=1.5pt]
table {%
-0.399999969918177 -0.500000030081823
-0.399999969918177 0.500000030081823
};
\addplot [semithick, mediumblue, line width=1.5pt]
table {%
3.00818226994792e-08 -0.500000030081823
3.00818226994792e-08 0.500000030081823
};
\addplot [semithick, mediumblue, mark=*, mark size=3, mark options={solid}]
table {%
-4.67332237246747e-17 0.200000000278001
};
\addplot [semithick, mediumred, mark=*, mark size=3, mark options={solid}]
table {%
0.399999986209088 0.199999983817384
};
\addplot [semithick, mediumred, mark=*, mark size=3, mark options={solid}]
table {%
-0.399999986209088 0.200000016738618
};
\draw (axis cs:-0.15,0.15) node[
  anchor=north west,
  text=mediumblue,
  rotate=0.0
]{$p$};
\draw (axis cs:0.28,0.95) node[
  anchor=north west,
  text=mediumred,
  rotate=0.0
]{$y_2$};
\draw (axis cs:-0.14,0.34) node[
  anchor=south east,
  text=mediumred,
  rotate=0.0
]{$y_1$};
\end{axis}

\end{tikzpicture}
		\vspace{-0.5em}
		\caption{}
		\label{distfig:a}
	\end{subfigure}
	\hfill
	\begin{subfigure}{.32\linewidth}
		\centering 	
\begin{tikzpicture}

\definecolor{crimson2143940}{RGB}{214,39,40}
\definecolor{darkgray176}{RGB}{176,176,176}
\definecolor{darkorange25512714}{RGB}{255,140,30}

\begin{axis}[
width=0.7\textwidth,
height=0.466\textwidth,
scale only axis=true,
tick align=outside,
tick pos=left,
x grid style={darkgray176},
xmin=-3, xmax=3,
xtick style={color=black},
y grid style={darkgray176},
ymin=-2, ymax=2,
ytick style={color=black},
ylabel= {\(\displaystyle c_{\mathrm{y}}\)},
xlabel= {\(\displaystyle c_{\mathrm{x}}\)},
x label style={at={(axis description cs:0.5,-0.17)},anchor=north},
y label style={at={(axis description cs:-0.12,.5)}, anchor=south},
label style={font=\scriptsize},
tick label style={font=\scriptsize},
]
\path [draw=black, fill=mediumorange]
(axis cs:2.61936922882972,-0.0701602161579664)
--(axis cs:2.91779709342754,-1.23246000951232)
.. controls (axis cs:2.9841143966715,-1.49074885247996) and (axis cs:2.88812862680967,-1.65305192558576) .. (axis cs:2.62983978384203,-1.71936922882972)
--(axis cs:1.46753999048768,-2.01779709342754)
.. controls (axis cs:1.20925114752004,-2.08411439667151) and (axis cs:1.04694807441424,-1.98812862680967) .. (axis cs:0.980630771170282,-1.72983978384203)
--(axis cs:0.682202906572456,-0.567539990487676)
.. controls (axis cs:0.615885603328495,-0.309251147520041) and (axis cs:0.711871373190331,-0.146948074414243) .. (axis cs:0.970160216157966,-0.0806307711702817)
--(axis cs:2.13246000951232,0.217797093427544)
.. controls (axis cs:2.39074885247996,0.284114396671506) and (axis cs:2.55305192558576,0.188128626809669) .. (axis cs:2.61936922882972,-0.0701602161579664);
\path [draw=black, fill=mediumcyan]
(axis cs:-2.4,1.9)
--(axis cs:-1.2,1.9)
.. controls (axis cs:-0.933333333333333,1.9) and (axis cs:-0.8,1.76666666666667) .. (axis cs:-0.8,1.5)
--(axis cs:-0.8,0.3)
.. controls (axis cs:-0.8,0.0333333333333334) and (axis cs:-0.933333333333333,-0.1) .. (axis cs:-1.2,-0.1)
--(axis cs:-2.4,-0.1)
.. controls (axis cs:-2.66666666666667,-0.1) and (axis cs:-2.8,0.0333333333333334) .. (axis cs:-2.8,0.3)
--(axis cs:-2.8,1.5)
.. controls (axis cs:-2.8,1.76666666666667) and (axis cs:-2.66666666666667,1.9) .. (axis cs:-2.4,1.9);
\path [draw=black, dotted]
(axis cs:-1.2,1.5)
--(axis cs:-1.2,0.3)
--(axis cs:-2.4,0.3)
--(axis cs:-2.4,1.5)
--cycle;
\path [draw=black, dotted]
(axis cs:2.53036382897609,-1.33193596437827)
--(axis cs:1.36806403562173,-1.63036382897609)
--(axis cs:1.06963617102391,-0.468064035621734)
--(axis cs:2.23193596437827,-0.169636171023908)
--cycle;
\draw[draw=black, dashed] (axis cs:1.06963617102391,-0.468064035621734) circle (0.4);
\draw[draw=black, dashed] (axis cs:-1.2,0.3) circle (0.4);

\draw[semithick, line width=1.5pt, mediumgreen,arrows={-Triangle[scale=0.8]}]
(-0.137187546675348,-1.5) -- (-0.93718,-1.5);
\draw (axis cs:-1.107,-1.18) node[
  anchor= west,
  text=mediumgreen,
  rotate=0.0
]{$n_{\mathrm{tr}}$};

\draw [semithick, mediumgreen, line width=1.5pt, dashed] (-0.137187546675348,3) -- (-0.137187546675348,-3);

\addplot [semithick, black, mark=*, mark size=1.5, mark options={solid}]
table {%
1.8 -0.9
};

\addplot [semithick, black, mark=*, mark size=1.5, mark options={solid}]
table {%
-1.8 0.9
};

\draw (axis cs:-1.8, 0.9) node[
  anchor=south,
  text=black,
  rotate=0.0
]{$c_1$};

\draw (axis cs:1.8, -0.9) node[
  anchor=south,
  text=black,
  rotate=0.0
]{$c_2$};

\path [draw=black, fill=mediumorange, opacity=0.4]
(axis cs:3.09652347826587,0.736663557203977)
--(axis cs:3.72462086678949,-1.70961425894848)
.. controls (axis cs:3.79093817003345,-1.96790310191611) and (axis cs:3.69495240017161,-2.13020617502191) .. (axis cs:3.43666355720398,-2.19652347826587)
--(axis cs:0.990385741051522,-2.82462086678949)
.. controls (axis cs:0.732096898083887,-2.89093817003345) and (axis cs:0.569793824978089,-2.79495240017161) .. (axis cs:0.503476521734128,-2.53666355720398)
--(axis cs:-0.124620866789488,-0.090385741051522)
.. controls (axis cs:-0.190938170033449,0.167903101916113) and (axis cs:-0.0949524001716121,0.330206175021911) .. (axis cs:0.163336442796023,0.396523478265872)
--(axis cs:2.60961425894848,1.02462086678949)
.. controls (axis cs:2.86790310191611,1.09093817003345) and (axis cs:3.03020617502191,0.994952400171612) .. (axis cs:3.09652347826587,0.736663557203977);
\path [draw=black, fill=mediumcyan, opacity=0.4]
(axis cs:-3.06281248442413,2.56281248442413)
--(axis cs:-0.537187515575867,2.56281248442413)
.. controls (axis cs:-0.2705208489092,2.56281248442413) and (axis cs:-0.137187515575867,2.4294791510908) .. (axis cs:-0.137187515575867,2.16281248442413)
--(axis cs:-0.137187515575867,-0.362812484424133)
.. controls (axis cs:-0.137187515575867,-0.6294791510908) and (axis cs:-0.2705208489092,-0.762812484424133) .. (axis cs:-0.537187515575867,-0.762812484424133)
--(axis cs:-3.06281248442413,-0.762812484424133)
.. controls (axis cs:-3.3294791510908,-0.762812484424133) and (axis cs:-3.46281248442413,-0.6294791510908) .. (axis cs:-3.46281248442413,-0.362812484424133)
--(axis cs:-3.46281248442413,2.16281248442413)
.. controls (axis cs:-3.46281248442413,2.4294791510908) and (axis cs:-3.3294791510908,2.56281248442413) .. (axis cs:-3.06281248442413,2.56281248442413);
\path [draw=black, draw opacity=0.4, dotted]
(axis cs:3.33718760233803,-1.80909021381442)
--(axis cs:0.89090978618558,-2.43718760233804)
--(axis cs:0.262812397661965,0.00909021381441977)
--(axis cs:2.70909021381442,0.637187602338035)
--cycle;
\path [draw=black, draw opacity=0.4, dotted]
(axis cs:-0.537187515575867,2.16281248442413)
--(axis cs:-0.537187515575867,-0.362812484424133)
--(axis cs:-3.06281248442413,-0.362812484424133)
--(axis cs:-3.06281248442413,2.16281248442413)
--cycle;

\draw[draw=black,draw opacity=0.4, dashed] (axis cs:0.262812397661965,0.00909021381441977) circle (0.4);
\draw[draw=black,draw opacity=0.4, dashed] (axis cs:-0.537187515575867,-0.362812484424133) circle (0.4);

\addplot [semithick, mediumblue, mark=*, mark size=3, mark options={solid}]
table {%
-0.137187546675348 0.00909014757983628
};
\addplot [semithick, mediumred, mark=*, mark size=3, mark options={solid}]
table {%
0.262812439069638 0.00909012376001116
};
\addplot [semithick, mediumred, mark=*, mark size=3, mark options={solid}]
table {%
-0.537187532420334 0.0090901713996614
};

\draw (axis cs:-0.190,-0.1) node[
  anchor=north west,
  text=mediumblue,
  rotate=0.0
]{$p$};
\draw (axis cs:0.28,0.55) node[
  anchor=north west,
  text=mediumred,
  rotate=0.0
]{$y_2$};
\draw (axis cs:-0.10,0.105) node[
  anchor=south east,
  text=mediumred,
  rotate=0.0
]{$y_1$};
\end{axis}

\end{tikzpicture}
		\vspace{-0.5em}
		\caption{}
		\label{distfig:b}
	\end{subfigure}
	\hfill
	\begin{subfigure}{.32\linewidth}
		\centering 	
\begin{tikzpicture}

\definecolor{crimson2143940}{RGB}{214,39,40}
\definecolor{darkgray176}{RGB}{176,176,176}
\definecolor{darkorange25512714}{RGB}{255,127,14}
\definecolor{steelblue31119180}{RGB}{31,119,180}

\begin{axis}[
width=0.7\textwidth,
height=0.466\textwidth,
scale only axis=true,
tick align=outside,
tick pos=left,
x grid style={darkgray176},
xmin=-3, xmax=3,
xtick style={color=black},
y grid style={darkgray176},
ymin=-2, ymax=2,
ytick style={color=black},
ylabel= {\(\displaystyle c_{\mathrm{y}}\)},
xlabel= {\(\displaystyle c_{\mathrm{x}}\)},
x label style={at={(axis description cs:0.5,-0.17)},anchor=north},
y label style={at={(axis description cs:-0.12,.5)}, anchor=south},
label style={font=\scriptsize},
tick label style={font=\scriptsize},
]
\path [draw=black, fill=mediumorange]
(axis cs:2.32283184851462,0.142423550280205)
--(axis cs:2.96582400248941,-0.870769960322213)
.. controls (axis cs:3.10871114781715,-1.09592407378942) and (axis cs:3.06757766374741,-1.27994470318688) .. (axis cs:2.84242355028021,-1.42283184851462)
--(axis cs:1.82923003967779,-2.06582400248941)
.. controls (axis cs:1.60407592621058,-2.20871114781715) and (axis cs:1.42005529681312,-2.16757766374741) .. (axis cs:1.27716815148538,-1.94242355028021)
--(axis cs:0.634175997510587,-0.929230039677788)
.. controls (axis cs:0.491288852182855,-0.704075926210583) and (axis cs:0.53242233625259,-0.520055296813115) .. (axis cs:0.757576449719795,-0.377168151485383)
--(axis cs:1.77076996032221,0.265824002489413)
.. controls (axis cs:1.99592407378942,0.408711147817145) and (axis cs:2.17994470318688,0.36757766374741) .. (axis cs:2.32283184851462,0.142423550280205);
\path [draw=black, fill=mediumcyan]
(axis cs:-2.4,1.9)
--(axis cs:-1.2,1.9)
.. controls (axis cs:-0.933333333333333,1.9) and (axis cs:-0.8,1.76666666666667) .. (axis cs:-0.8,1.5)
--(axis cs:-0.8,0.3)
.. controls (axis cs:-0.8,0.0333333333333334) and (axis cs:-0.933333333333333,-0.1) .. (axis cs:-1.2,-0.1)
--(axis cs:-2.4,-0.1)
.. controls (axis cs:-2.66666666666667,-0.1) and (axis cs:-2.8,0.0333333333333334) .. (axis cs:-2.8,0.3)
--(axis cs:-2.8,1.5)
.. controls (axis cs:-2.8,1.76666666666667) and (axis cs:-2.66666666666667,1.9) .. (axis cs:-2.4,1.9);
\path [draw=black, dotted]
(axis cs:-1.2,1.5)
--(axis cs:-1.2,0.3)
--(axis cs:-2.4,0.3)
--(axis cs:-2.4,1.5)
--cycle;
\path [draw=black, dotted]
(axis cs:2.62809283228861,-1.08510067831381)
--(axis cs:1.61489932168619,-1.72809283228861)
--(axis cs:0.971907167711393,-0.714899321686189)
--(axis cs:1.98510067831381,-0.071907167711393)
--cycle;
\draw[draw=black, dashed] (axis cs:0.971907167711393,-0.714899321686189) circle (0.4);
\draw[draw=black, dashed] (axis cs:-1.2,0.3) circle (0.4);
\path [draw=black, fill=mediumorange, opacity=0.4]
(axis cs:2.50544112358974,0.959370485195065)
--(axis cs:3.78277093740427,-1.05337923539734)
.. controls (axis cs:3.92565808273201,-1.27853334886454) and (axis cs:3.88452459866227,-1.46255397826201) .. (axis cs:3.65937048519507,-1.60544112358974)
--(axis cs:1.64662076460266,-2.88277093740427)
.. controls (axis cs:1.42146665113546,-3.025658082732) and (axis cs:1.23744602173799,-2.98452459866227) .. (axis cs:1.09455887641026,-2.75937048519507)
--(axis cs:-0.182770937404273,-0.746620764602663)
.. controls (axis cs:-0.325658082732005,-0.521466651135459) and (axis cs:-0.284524598662269,-0.33744602173799) .. (axis cs:-0.0593704851950652,-0.194558876410258)
--(axis cs:1.95337923539734,1.08277093740427)
.. controls (axis cs:2.17853334886454,1.22565808273201) and (axis cs:2.36255397826201,1.18452459866227) .. (axis cs:2.50544112358974,0.959370485195065);
\path [draw=black, fill=mediumcyan, opacity=0.4]
(axis cs:-2.99192416820495,2.49192416820495)
--(axis cs:-0.608075831795049,2.49192416820495)
.. controls (axis cs:-0.341409165128382,2.49192416820495) and (axis cs:-0.208075831795049,2.35859083487162) .. (axis cs:-0.208075831795049,2.09192416820495)
--(axis cs:-0.208075831795049,-0.291924168204951)
.. controls (axis cs:-0.208075831795049,-0.558590834871618) and (axis cs:-0.341409165128382,-0.691924168204951) .. (axis cs:-0.608075831795049,-0.691924168204951)
--(axis cs:-2.99192416820495,-0.691924168204951)
.. controls (axis cs:-3.25859083487162,-0.691924168204951) and (axis cs:-3.39192416820495,-0.558590834871618) .. (axis cs:-3.39192416820495,-0.291924168204951)
--(axis cs:-3.39192416820495,2.09192416820495)
.. controls (axis cs:-3.39192416820495,2.35859083487162) and (axis cs:-3.25859083487162,2.49192416820495) .. (axis cs:-2.99192416820495,2.49192416820495);
\path [draw=black, draw opacity=0.4, dotted]
(axis cs:3.44503976720347,-1.26770995338894)
--(axis cs:1.43229004661106,-2.54503976720347)
--(axis cs:0.154960232796534,-0.532290046611064)
--(axis cs:2.16770995338894,0.745039767203467)
--cycle;
\path [draw=black, draw opacity=0.4, dotted]
(axis cs:-0.608075831795049,2.09192416820495)
--(axis cs:-0.608075831795049,-0.291924168204951)
--(axis cs:-2.99192416820495,-0.291924168204951)
--(axis cs:-2.99192416820495,2.09192416820495)
--cycle;
\draw[draw=black,draw opacity=0.4, dashed] (axis cs:0.154960232796534,-0.532290046611064) circle (0.4);
\draw[draw=black,draw opacity=0.4, dashed] (axis cs:-0.608075831795049,-0.291924168204951) circle (0.4);

\draw[semithick, line width=1.5pt, mediumgreen,arrows={-Triangle[scale=0.8]}]
(-0.5480472917441549,-1.4326679388972259) -- (-1.3110834594604128,-1.192301953872541);
\draw (axis cs:-1.257,-1.02) node[
  anchor= west,
  text=mediumgreen,
  rotate=0.0
]{$n_{\mathrm{tr}}$};

\draw [semithick, mediumgreen, line width=1.5pt, dashed] (0.97527214, 3.40307377) -- (-1.42838771, -4.2272879);

\addplot [semithick, black, mark=*, mark size=1.5, mark options={solid}]
table {%
1.8 -0.9
};

\addplot [semithick, black, mark=*, mark size=1.5, mark options={solid}]
table {%
-1.8 0.9
};

\draw (axis cs:-1.8, 0.9) node[
  anchor=south,
  text=black,
  rotate=0.0
]{$c_1$};

\draw (axis cs:1.8, -0.9) node[
  anchor=south,
  text=black,
  rotate=0.0
]{$c_2$};

\addplot [semithick, mediumblue, mark=*, mark size=3, mark options={solid}]
table {%
-0.226557786773639 -0.412107064576731
};
\addplot [semithick, mediumred, mark=*, mark size=3, mark options={solid}]
table {%
0.154960280154066 -0.532290051755777
};
\addplot [semithick, mediumred, mark=*, mark size=3, mark options={solid}]
table {%
-0.608075853701344 -0.291924077397685
};
\draw (axis cs:-0.45,-0.49) node[
  anchor=north west,
  text=mediumblue,
  rotate=0.0
]{$p$};
\draw (axis cs:0.04,0.14) node[
  anchor=north west,
  text=mediumred,
  rotate=0.0
]{$y_2$};
\draw (axis cs:-0.13,-0.22) node[
  anchor=south east,
  text=mediumred,
  rotate=0.0
]{$y_1$};
\end{axis}

\end{tikzpicture}
		\vspace{-0.5em}
		\caption{}
		\label{distfig:c}
	\end{subfigure}
	\caption{Visualization of two unscaled padded polytopes in bold color, their downsized interior polytopes (dotted) and exemplary circles used for padding (dashed). Further, scaled versions of the padded polytopes according to the scaling factor $\alpha$ determined by the distance problem (\ref{distprob}) are visualized in light color.}
	\vspace{-1em}
	\label{distfig}
\end{figure*}
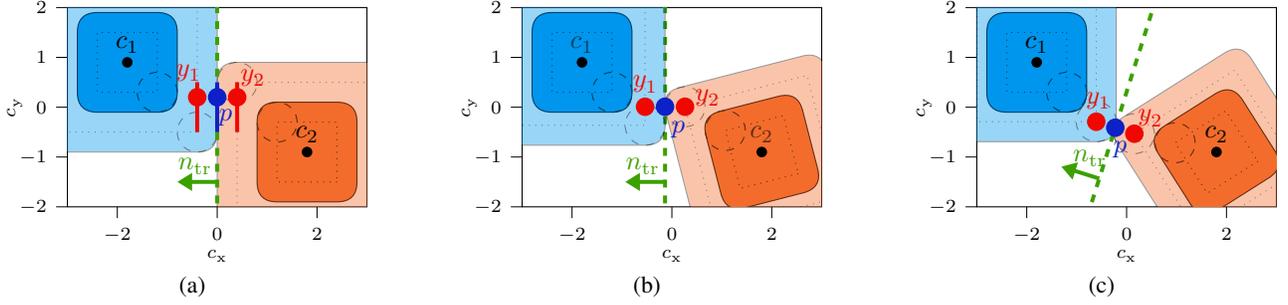

\subsection{Signed Distance Function for Padded Polytopes} \label{section1a}
Consider two planar polytopic bodies given in a halfspace representation
\begin{equation*}
\mathcal{P}_{1} = \{p \in \mathbb{R}^{2}\ \vert \  A_{1}x \leq b_{1}\}, \ \mathcal{P}_{2} = \{p \in \mathbb{R}^{2} \ \vert \  A_{2}x\leq b_{2}\},
\end{equation*}
where $A_{1} \in \mathbb{R}^{\nha \times 2}$, $b_{1} \in \mathbb{R}^{\nha}$, $A_{2} \in \mathbb{R}^{\nhb \times 2}$, \mbox{$b_{2} \in \mathbb{R}^{\nhb}$}. Without loss of generality, assume that for a given mass distribution, the halfspace representation of the polytopes is constructed such that their center of mass (CoM) coincides with the origin. Furthermore, let the rows of $A_{1}$ and $A_{2}$ be vectors with norm unity. We also assume that for a scalar $r>0$, it holds $b_{i} - \bm{1}r > 0$, $i=1,2$, where $\bm{1} = (1,\dots,1)^{\top}$ denotes the vector of all ones. This means that the minimum Euclidean distance from the CoM to the boundary of the respective polytopes is larger than $r$. 

We now transform the polytopes by several alterations. First we associate degrees of freedom (DoF) with them, namely $c_{i}\in \mathbb{R}^{2}$ for the position of their CoM and $\xi_{i} \in \mathbb{R}$ for their orientation, both with respect to a world coordinate frame. We capture all DoF of the bodies in a system configuration variable $q = (c_1,\xi_1,c_2,\xi_2)$. Next, we replace the right-hand side vectors $b_i$ by $b_i - \bm{1}r$ to obtain downsized polytopes, which are then swept by a sphere of radius $r$. This yields an inner approximation of the original polytopes with round corners. Finally, we introduce the scaling factor $\alpha > 0$, which allows one to obtain scaled versions of the padded polytopes by altering the size of the downsized polytopes, for $\alpha = 1$ the original size is retained. We obtain analytic expressions for the scaled padded polytopes by
\begin{alignat*}{1}
	\mathcal{B}_{1}(q,\alpha) &= \{p \in \mathbb{R}^{2} \mid \exists \, y_1  \in \mathbb{R}^{2}: \\
	&A_{1}R( \xi_{1})^{\top}(y_1-c_{1}) \leq \alpha b_{1} - \bm{1}r, \Vert p - y_{1} \Vert_{2} \leq r\}, \\
	\mathcal{B}_{2}(q,\alpha) &= \{p \in \mathbb{R}^{2} \mid \exists \, y_2  \in \mathbb{R}^{2}: \\
	&A_{2}R( \xi_{2})^{\top}(y_2-c_{2}) \leq \alpha b_{2} - \bm{1}r, \Vert p - y_{2} \Vert_{2} \leq r\}.
\end{alignat*}
See Fig. \ref{distfig} for a visualization of these sets.

A SDF $\Phi(q)$ for the padded polytopes $\mathcal{B}_{1}(q,1)$ and $\mathcal{B}_{2}(q,1)$  can be expressed by determining the smallest scaling factor such that there is a common point in both scaled bodies \cite{dcol}. Using the analytic representations of the sets derived above, we formulate this optimization problem by
\begin{minie}
	{p,\alpha, y_{1},y_{2}}{\alpha-1\phantom{aaaaaaaaaaaaaaa}\label{distprob:a}}{\label{distprob}}{\Phi(q) =}
	\addConstraint{A_{1}R( \xi_{1})^{\top}(y_{1}-c_{1})}{\leq \alpha b_{1} - \bm{1}r, \label{distprob:b}}
	\addConstraint{A_{2}R( \xi_{2})^{\top}(y_{2}-c_{2})}{\leq \alpha b_{2} - \bm{1}r, \label{distprob:c}}
	\addConstraint{(p-y_{1})^{\top}(p-y_{1})}{\leq  r^2, \label{distprob:d}}
	\addConstraint{(p-y_{2})^{\top}(p-y_{2})}{\leq r^2, \label{distprob:e}}
\end{minie}
which we write compactly as
\begin{equation} \label{distprob_compact}
\Phi(q) =\min_{z} \ f_{\mathrm{d}}(z) \quad \mathrm{s.t.} \quad g_{\mathrm{d}}(z,q)\leq 0,
\end{equation}
where $z = (p,\alpha, y_{1},y_{2}) \in \mathbb{R}^{7}$ collects all primal variables. 

The convex optimization problem (\ref{distprob}) fulfills Slater's constraint qualification, as there always exists a feasible point for which the constraints are not active. This can be seen from Fig. \ref{distfig} as by choosing a larger $\alpha$ the scaled padded polytopes will overlap and, thus, $p$, $y_1$, $y_2$, can be chosen such that the constraints of (\ref{distprob}) are fulfilled with strict inequalities. Consequently, the Karush-Kuhn-Tucker (KKT) conditions are necessary and sufficient for optimality.

In the following, we will investigate how optimal primal and dual solutions to (\ref{distprob}) depend on the system \mbox{configuration $q$}. To this end, we associate nonnegative Lagrange multipliers $\mu_{1, \mathrm{p}} \in \mathbb{R}^{\nha}$, $\mu_{2, \mathrm{p}}\in \mathbb{R}^{\nhb}$, \mbox{$\mu_{1, \mathrm{c}}\in \mathbb{R}$}, $\mu_{2, \mathrm{c}}\in \mathbb{R}$, with the constraints (\ref{distprob:b})-(\ref{distprob:e}) and collect them in the vector $\mu = (\mu_{1, \mathrm{p}},\mu_{2, \mathrm{p}},\mu_{1, \mathrm{c}},\mu_{2, \mathrm{c}})$. The stationary KKT condition of (\ref{distprob}), i.e., $\nabla_{z} f_{\mathrm{d}}(z) + \nabla_{z} g_{\mathrm{d}}(z,q)\mu = 0$, evaluates as the system of equations given by
\begin{equation} \label{statKKTcond}
	\begin{split}
		1 -b_{1}^{\top} \mu_{1, \mathrm{p}} -b_{2}^{\top} \mu_{2, \mathrm{p}} &= 0, \\
		2(p-y_{1})\mu_{1, \mathrm{c}} &= - 2(p-y_{2})\mu_{2, \mathrm{c}},  \\
		R( \xi_{1})A_{1}^{\top}\mu_{1, \mathrm{p}} &=  2(p-y_{1}) \mu_{1, \mathrm{c}}, \\
		R( \xi_{2})A_{2}^{\top}\mu_{2, \mathrm{p}} &=  2(p-y_{2}) \mu_{2, \mathrm{c}}.
	\end{split}
\end{equation}
The following proposition holds true for problem (\ref{distprob}).

\begin{proposition} \label{uniquepdvar}
For any  $q \in \mathbb{R}^{n_{q}}$, apart from the degenerate case $c_1 = c_2$, optimal dual variables for (\ref{distprob}) are unique and depend continuously on $q$. Optimal primal variables are unique if there is no patch contact between the scaled padded polytopes.
\end{proposition}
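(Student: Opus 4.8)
The plan is to exploit that, by convexity and Slater's condition, the system consisting of the stationarity conditions (\ref{statKKTcond}), primal feasibility, dual feasibility $\mu \ge 0$, and complementarity for (\ref{distprob}) is both necessary and sufficient for optimality, so that the optimal value and hence $\alpha^\star = \Phi(q)+1$ is unique. First I would introduce the contact normal $n := R(\xi_1)A_1^\top\mu_{1,\mathrm p}$; the stationarity conditions (\ref{statKKTcond}) then give $R(\xi_2)A_2^\top\mu_{2,\mathrm p} = -n$ and $n = 2\mu_{1,\mathrm c}(p-y_1) = -2\mu_{2,\mathrm c}(p-y_2)$. The key reduction is that, once the direction and magnitude of $n$ are fixed, every dual component is determined: on each body the active halfspaces meeting at the contact have linearly independent adjacent unit normals, so a nonnegative $\mu_{i,\mathrm p}$ with $A_i^\top\mu_{i,\mathrm p} = R(\xi_i)^\top(\pm n)$ and support on the active set is unique; the normalization $b_1^\top\mu_{1,\mathrm p}+b_2^\top\mu_{2,\mathrm p}=1$ then pins $\|n\|$, and $\mu_{i,\mathrm c} = \|n\|/(2r)$ follows from the active sphere constraint. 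Hence dual uniqueness reduces to showing that $n$ is nonzero with a unique direction.

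For this I would argue geometrically. Since $\mathcal{B}_i(q,\alpha)$ grows monotonically with $\alpha$, the feasible values of $\alpha$ form a ray $[\alpha^\star,\infty)$ and $\alpha^\star$ is the first scaling at which the two padded polytopes touch. When $c_1\neq c_2$ the bodies are disjoint for small $\alpha$, shrinking toward the distinct centers $c_1,c_2$, so at $\alpha^\star$ the two convex bodies touch with disjoint interiors; because padded polytopes have $C^1$ boundaries, the outward normals at a contact point coincide up to sign with the unique common supporting-hyperplane normal, fixing the direction of $n$ even along a patch, where the parallel flat faces share a single normal. Nonvanishing of $n$ I would get by contradiction: $n=0$ forces $A_i^\top\mu_{i,\mathrm p}=0$, but the active-face normals of a convex body all lie in one open half-plane (the normal cone at the contact), so a nonnegative combination of them vanishes only if $\mu_{i,\mathrm p}=0$, contradicting the normalization. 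The degenerate case $c_1=c_2$ is exactly where this fails: the bodies are concentric and admit optima with $p=y_1=y_2$, slack sphere constraints, $\mu_{i,\mathrm c}=0$ and $n=0$, so the contact direction, and with it the duals, is no longer determined.

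For continuity I would run a sequential closedness argument. Fixing $\bar q$ with $c_1\neq c_2$ and taking $q_k\to\bar q$, the normalization together with $b_i>r>0$ bounds $\mu_{i,\mathrm p}$, and $\mu_{i,\mathrm c}=\|n\|/(2r)$ is then bounded as well, so the duals lie in a compact set, while the primal optima stay in a bounded region near the centers. Passing to a convergent subsequence of primal--dual pairs and using that the marginal function $\Phi$ is continuous (a convex parametric program with Slater's condition holding robustly) and that the feasible-set map is closed, the limit is a KKT point at $\bar q$, whose dual part is the unique dual solution. Since every subsequence converges to this same limit, the whole dual sequence converges, which gives continuity on $\{c_1\neq c_2\}$; the only care needed is that stationarity and complementarity are closed conditions, stable under the limit even when the active set changes, which holds since each condition in (\ref{statKKTcond}) and $0\le\mu\perp -g_{\mathrm d}\ge 0$ is continuous in $(z,\mu,q)$.

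Finally, for primal uniqueness in the absence of patch contact, I would use that ``no patch contact'' means $\mathcal{B}_1(q,\alpha^\star)\cap\mathcal{B}_2(q,\alpha^\star)$ is a single point, which must equal the optimal $p$; the active sphere constraints and the unique normal $\hat n = n/\|n\|$ then force $y_1 = p-r\hat n$ and $y_2 = p+r\hat n$, and $\alpha^\star$ is already fixed, so $z$ is unique. I expect the main obstacle to be the geometric step of the second paragraph: rigorously establishing that $c_1\neq c_2$ guarantees a genuine boundary contact with disjoint interiors, hence a unique nonzero normal, and dually that the active-face normals lie in a single open half-plane. The algebraic recovery of the duals from $n$ and the stability argument are comparatively routine once this is in place.
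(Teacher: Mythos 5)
Your proof is correct in its essentials but takes a genuinely different route from the paper's. The paper proves Proposition \ref{uniquepdvar} by an exhaustive case analysis on the cardinalities of the dual supports $\vert\mathcal{A}_{1,\mathrm{p}}\vert,\vert\mathcal{A}_{2,\mathrm{p}}\vert\in\{1,2\}$: in each case it solves the combined active-constraint/stationarity system (\ref{KKTsystem}) explicitly, inverting $\bar{A}_i$ to get $y_i$ as an affine function of $\alpha$, reducing to a scalar quadratic (\ref{quadalphaeq}) for $\alpha$, placing $p$ at the midpoint of $y_1,y_2$, and back-substituting to get the multipliers (in the patch case the dual system collapses to a $4\times 4$ linear system). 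You instead pin down the contact normal first, by a geometric argument -- at the first touching scaling the two padded bodies have disjoint interiors and $C^{1}$ boundaries, so the separating hyperplane is the common tangent and its normal is unique even along a patch -- and then recover all multipliers algebraically from that normal, with nonvanishing secured by pointedness of the normal cone. This unifies the paper's three cases and makes explicit \emph{why} the padding matters (smoothness of the boundary kills corner ambiguity), at the price of having to make rigorous the ``disjoint interiors at $\alpha^{\star}$'' step, which you correctly flag as the crux. For continuity the paper cites the parametric convex-programming result of \cite{semple}, whereas your compactness-plus-closedness-of-KKT argument is self-contained; both are valid. The paper's computation additionally yields identities ($\mu_{1,\mathrm{c}}=\mu_{2,\mathrm{c}}$, the explicit midpoint formula for $p$) that are reused later in the impact-law discussion and in the proof of Proposition \ref{propunique}, which your route does not produce for free. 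One shared blind spot worth noting: both arguments implicitly assume the downsized polytopes remain full-dimensional at $\alpha^{\star}$ (so that at most two facet constraints are active per body and adjacent normals are independent); if the optimal scaling collapses one downsized polytope to a point lying inside the other body, more than two constraints become active, the normal cone is no longer pointed, and dual uniqueness can genuinely fail even with $c_1\neq c_2$ -- this is an implicit non-degeneracy assumption of the proposition rather than a defect specific to your proof.
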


A full proof can be found in Appendix \ref{app:proof1}. In the following, we briefly give some intuition on why this holds true with the help of the visualizations in Fig. \ref{distfig}. First, we observe that the optimal scaling factor $\alpha$ is uniquely determined as increasing it violates optimality and if it is decreased, no common point $p$ is included in both scaled bodies. In the case of Fig. \ref{distfig:b} or Fig. \ref{distfig:c}, we observe that there is a unique point $p$ that is element of both scaled shapes, as well as unique points $y_1$ and $y_2$ that are elements of the downscaled polytopes. In the case of Fig. \ref{distfig:a}, we observe that there are multiple viable solutions for $p$, $y_1$ and $y_2$. However, we note that the vectors $p-y_{1}$ and $p-y_{2}$ are unique in any case of Fig. \ref{distfig}. Consequently, one can show that system (\ref{statKKTcond}) together with the complementarity condition of the KKT system is uniquely solvable for the Lagrange multipliers.

\subsection{Complementarity Lagrangian System} \label{section1b}
Dynamic interactions between rigid bodies are commonly modeled by CLS, which combine Newton-Euler equations with the complementarity constraint $0 \leq \Phi(q) \perp \lambda_{\mathrm{n}} \geq 0$, where $\lambda_{\mathrm{n}}$ is the contact force magnitude. If $\Phi(q)$ is differentiable, forces and impulses act along the SDF gradient, which is referred to as the contact normal vector. Since the SDF considered here is not differentiable for configurations as in Fig. \ref{distfig:a}, standard theory does not directly imply. However, we can determine directional derivatives by
\begin{equation} \label{dirderv}
\partial_{d}\Phi(q) = \min_{z, \mu \text{ solve } (\ref{distprob})} d^{\top} \nabla_q  g_{\mathrm{d}}(z,q) \mu ,
\end{equation}
for the direction $d\in \mathbb{R}^{n_{q}}$, cf. \cite{hogan}. Thus, we observe that $\Phi(q)$ is differentiable whenever there exists a unique primal solution to the distance problem (\ref{distprob}). Equation (\ref{dirderv}) motivates us to consider contact normals given by
\begin{align} 
	n(z,\mu,q) = \nabla_q  g_{\mathrm{d}}(z,q) \mu &= \begin{pmatrix}
		n_{\mathrm{tr}}(\mu,q) \\
		(p - c_{1}) \times 	n_{\mathrm{tr}}(\mu,q) \\
		-n_{\mathrm{tr}}(\mu,q) \\
		- (p - c_{2}) \times 	n_{\mathrm{tr}}(\mu,q)
	\end{pmatrix}, \label{normalform} \\
	n_{\mathrm{tr}}(\mu,q) &= - R(\xi_{1})A_{1}^{\top}\mu_{1,\mathrm{p}}. \nonumber
\end{align}
The right-hand side of (\ref{normalform}) is obtained by evaluating $\nabla_q  g_{\mathrm{d}}(z,q)$ and by using equality conditions \mbox{implied by (\ref{statKKTcond})}. We observe that $n(z,\mu,q)$ is the gradient of $ \Phi(q)$ for configurations where the SDF is differentiable. We show that if $n(z,\mu,q)$ is used to simulate forces or impulses, the ambiguity for nondifferentiable configurations is resolved due to the combined consideration with the Newton-Euler equations.

We formulate a CLS by combining the KKT conditions of (\ref{distprob_compact}) together with the Newton-Euler equations for rigid body dynamics and obtain
\begin{subequations} \label{cls}
	\begin{align}
		&\dot{q} = \nu, \label{cls:a}\\
		&M \dot{\nu} = f_{\mathrm{v}}(q,\nu,u) + n(z,\mu,q)\lambda_{\mathrm{n}}, \label{cls:b}\\
		&0 \leq  f_{\mathrm{d}}(z)  \ \perp \ \lambda_{\mathrm{n}} \geq 0, \label{cls:c} \\
		&\nabla_{z} f_{\mathrm{d}}(z) + \nabla_{z} g_{\mathrm{d}}(z,q) \mu = 0, \label{cls:d} \\
		&0 \leq \mu\ \perp \ -g_{\mathrm{d}}(z,q) \geq 0, \label{cls:e}\\
		&\text{State jump law (Section \ref{section1imp})} \label{cls:f},
	\end{align}
\end{subequations}
where $\nu \in \mathbb{R}^{n_\mathrm{q}}$ is the system velocity and $M \in \mathbb{R}^{n_\mathrm{q} \times n_\mathrm{q}}$ is the inertia matrix, which is positive definite and diagonal. Furthermore, the function $f_{\mathrm{v}}: \mathbb{R}^{n_\mathrm{q}}\times \mathbb{R}^{n_\mathrm{q}} \times \mathbb{R}^{n_\mathrm{u}} \rightarrow \mathbb{R}^{n_\mathrm{q}}$ captures external forces acting on the system, e.g., such as gravity or forces imposed by a control input $u \in \mathbb{R}^{n_\mathrm{u}}$. 

\subsection{Simultaneous Impact Law} \label{section1imp}
In the following, an impact law is derived that determines how the CLS (\ref{cls}) evolves if it enters a state $q(t_{\mathrm{s}})$ with $\Phi(q(t_\mathrm{s})) = 0$ and there exists a solution $(z, \mu)$ to (\ref{distprob}) such that $n(z,\mu,q(t_\mathrm{s}))^{\top} \nu(t_\mathrm{s}^{-})  < 0$. In this case, the system velocities need to undergo an instantaneous jump to avoid penetration of the objects. Here we have to deal with simultaneous impacts, as in the case of a patch contact, it has to be ensured that after an impact all points on the contact patch have a positive relative velocity with respect to the halfspace separating the objects, cf. Fig. \ref{distfig}.

Moreau's impact law is commonly stated for the case where one has $n_{\mathrm{v}} \in \mathbb{N}$ continuously differentiable SDF $\tilde{\Phi}_{i}(q), \ i = 1,\dots, n_{\mathrm{v}}$, that all become active at \mbox{time $t_{\mathrm{s}}$}, i.e., $\tilde{\Phi}_{i}(q(t_\mathrm{s})) = 0$. Given the corresponding contact normals $\tilde{n}_{i}(q) = \nabla_{q}\tilde{\Phi}_{i}(q)$, Moreau's impact law can be stated as a mixed linear complementarity problem (MLCP), \mbox{cf. \cite[Equation 5.65]{brogliato2016}}. In this formulation, one aims to find $\tilde{\Lambda}_{\mathrm{n},i} \in \mathbb{R}, \ i = 1,\dots, n_{\mathrm{v}}$, such that
\begin{subequations} \label{moreau}
\begin{align}
	&\nu(t_\mathrm{s}^{+}) = \nu(t_\mathrm{s}^{-}) + M^{-1} \sum_{i = 1}^{n_{\mathrm{v}}}\tilde{n}_{i}(q(t_\mathrm{s})) \tilde{\Lambda}_{\mathrm{n},i}, \label{moreau:a}\\
	&0 \leq \tilde{\Lambda}_{\mathrm{n},i}, \quad  0 \leq \tilde{n}_{i}(q(t_\mathrm{s}))^{\top}\nu(t_\mathrm{s}^{+}), \quad i = 1,\dots,n_{\mathrm{v}}, \\
	& 0= \left(\sum_{i = 1}^{n_{\mathrm{v}}} \tilde{\Lambda}_{\mathrm{n},i} \tilde{n}_{i}(q(t_\mathrm{s})) \right)^{\top}\nu(t_\mathrm{s}^{+}).
\end{align}
\end{subequations}
Now, for the case of impacts between padded polytopes, one obtains, generally both in two and three dimensions, polytopic contact patches. Therefore, one can locally resolve an impact, by considering the $n_{\mathrm{v}}$ vertices, denoted by $w_{i}$, of the polytopic contact patch in terms of the SDF derived in Section \ref{section1a}. We obtain
\begin{equation*}
\left\{\sum_{i=1}^{n_{\mathrm{v}}}\theta_{i}w_{i}  \mid  \bm{1}^{\top}\theta = 1, \ \theta_{i} \geq 0 \right\} = \{p \mid (z, \mu) \text{ solve }(\ref{distprob}) \},
\end{equation*}
where we remind the reader that $p$ is included in $z$.

The contact patch vertices $w_{i}$ are at the moment of impact elements of both considered bodies. By virtually fixing them to one body and defining the SDF $\tilde{\Phi}_{i}(q)$ such that they encode the point-object distance between $w_{i}$ fixed to one body and the other body, one can apply Moreau's law to solve for the resulting impulse. For general numerical simulation this is impractical as one does not know beforehand which contacts form and has generally no known expression for the vertices $w_{i}$ and the SDF $\tilde{\Phi}_{i}(q)$.

We now consider the SDF defined in Section \ref{section1a} and show that it allows us to encode the above impact law, without the need to locally enumerate vertices of contact patches. To this end, let $(z, \mu)$ solve (\ref{distprob}) at $q(t_\mathrm{s})$, then the corresponding $p$ is an element of the contact patch. We observe that, $n(z,\mu,q)$ is a linear function in $p$ and does not depend on the other components of $z$. Due to the definition of the functions $\tilde{\Phi}_{i}(q)$ it holds for $\tilde{z}_{i} = (w_{i},\dots)$, $n(\tilde{z}_{i},\mu,q(t_\mathrm{s})) = \tilde{n}_{i}(q(t_\mathrm{s}))$. Consequently, for any $(z, \mu)$ solving (\ref{distprob}) at $q(t_\mathrm{s})$ and any $\Lambda_{\mathrm{n}} \geq 0$, there exist \mbox{$\tilde{\Lambda}_{\mathrm{n},i} \geq 0$}, $ \ i = 1,\dots,n_{\mathrm{v}}$, such that
\begin{equation*}
n(z,\mu,q(t_\mathrm{s})) \Lambda_{\mathrm{n}} = \sum_{i = 1}^{n_{\mathrm{v}}}\tilde{n}_{i}(q(t_\mathrm{s})) \tilde{\Lambda}_{\mathrm{n},i}.
\end{equation*}
Now, if it holds
\begin{align*}
	 0 \leq \tilde{n}_{i}(q(t_\mathrm{s}))^{\top}\nu(t_\mathrm{s}^{+}), \quad i = 1,\dots,n_{\mathrm{v}},
\end{align*}
then for all $\hat{\Lambda}_{\mathrm{n},i}\geq 0, \ i = 1,\dots, n_{\mathrm{v}}$, it follows
\begin{align*}
	 0 \leq \left(\sum_{i = 1}^{n_{\mathrm{v}}} \hat{\Lambda}_{\mathrm{n},i} \tilde{n}_{i}(q(t_\mathrm{s})) \right)^{\top}\nu(t_\mathrm{s}^{+}),
\end{align*}
and vice versa. Consequently, solving (\ref{moreau}) is equivalent to determining $\Lambda_{\mathrm{n}} \geq 0$ and $(z_{\mathrm{I}},\mu_{\mathrm{I}})$ solving (\ref{distprob}) at $q(t_\mathrm{s})$ such that
\begin{subequations}  \label{impactres}
	\begin{align}
		&\nu(t_{\mathrm{s}}^{+}) = \nu(t_{\mathrm{s}}^{-}) + M^{-1} n(z_{\mathrm{I}},\mu_{\mathrm{I}},q(t_{\mathrm{s}})) \Lambda_{\mathrm{n}}, \label{impactres:a} \\
		&0 \leq n(\hat{z}_{\mathrm{I}},\hat{\mu}_{\mathrm{I}},q(t_{\mathrm{s}}))^{\top} v(t_{\mathrm{s}}^{+}),  \ \forall \, (\hat{z}_{\mathrm{I}},\hat{\mu}_{\mathrm{I}}) \text{ solving } (\ref{distprob}), \label{impactres:b}\\
		& 0 = n(z_{\mathrm{I}},\mu_{\mathrm{I}},q(t_{\mathrm{s}}))^{\top} v(t_{\mathrm{s}}^{+}). \label{impactres:c}
	\end{align}
\end{subequations}
Since Moreau's impact law provides unique post-impact velocities $\nu(t_{\mathrm{s}}^{+})$ \cite[Corollary 5.1]{brogliato2016}, (\ref{impactres:a}) implies that $z_{\mathrm{I}}$ and in particular $p_{\mathrm{I}}$ are also unique. Thus, $p_{\mathrm{I}}$ can be viewed as a unique ECP for impact resolution.

Now, numerical evaluation of (\ref{impactres:b}) is generally not possible as we do not have a representation of the solution set of (\ref{distprob}), but only have access to single evaluations. Since this condition encodes that all points on the contact surface have a post-impact velocity such that no penetration occurs, we propose to substitute (\ref{impactres:b}) by the condition
\begin{equation} \label{expeulerstep}
	0 \leq \Phi(q(t_{\mathrm{s}}) + h_{\mathrm{E}} \nu(t_{\mathrm{s}}^{+})),
\end{equation}
where $0 < h_{\mathrm{E}}  << 1$ is a small positive constant. Thus, we consider one additional evaluation of the SDF by using an explicit Euler step over a small time interval. If the objects do not intersect an arbitrarily small amount of time after the impulse has acted, (\ref{impactres:b}) was fulfilled. 

\subsection{Well-Posedness of CLS and Uniqueness of ECP} \label{section1unique}
In the following, we show that the CLS (\ref{cls}) is well-posed and admits unique state trajectories $(q,\nu)$. This is not directly clear as the here considered SDF may yield many possible candidates for the ECP $p$, which in turn could result in ambiguous contact forces that are applied to the system.

If we have $f_{\mathrm{d}}(z) > 0$, then then it follows $\lamn = 0$. Therefore, the primal-dual variables of the embedded distance problem are decoupled from the dynamics and there may be multiple primal variables solving the system.

Now assume $\lambda_{\mathrm{n}} > 0$ for some time interval $(t_1,t_2)$, then we have $f_{\mathrm{d}}(z) = 0$ and contact forces are active to prevent penetration of the objects. The ECP $p$ is by definition an element of the contact patch and $\ntr$ induces a separating hyperplane between the objects. If we virtually fix $p$ onto either object, its velocity in world coordinates is given by
\begin{align*}
	V_{1} &= v_{1} + \omega_{1} \times (p - c_{1}), \\
	V_{2} &= v_{2} + \omega_{2} \times (p - c_{2}).
\end{align*}
I.e., $V_{1}$ and $V_{2}$ describes how $p$ would move along with the first or second object, respectively. If contact persists during $(t_1,t_2)$, we conclude that the condition 
\begin{equation} \label{contactzerovel}
	n(z, \mu, q)^{\top}\nu = n_{\mathrm{tr}}^{\top} (V_{1}(p) - V_{2}(p)) = 0
\end{equation}
has to hold, as the ECP $p$ can only have relative velocity along the separating halfspace between the objects. 

The following proposition is the main result that ensures uniqueness of state trajectories.
\begin{proposition} \label{propunique}
	On every time interval $(t_{1},t_{2})$ during which the CLS (\ref{cls}) evolves with $\lamn > 0$, no impact occurs and the active set of the SDF does not change, it has a unique solution.
\end{proposition}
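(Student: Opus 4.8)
The plan is to show that on such an interval the contact force $n(z,\mu,q)\lamn$ entering the Newton--Euler equation (\ref{cls:b}) is a single-valued and sufficiently regular function of the state $(q,\nu)$; uniqueness of $(q,\nu)$ then follows from the classical Picard--Lindel\"of uniqueness theorem applied to the resulting ordinary differential equation (\ref{cls:a})--(\ref{cls:b}), since no impact occurs and $\nu$ stays continuous on $(t_1,t_2)$. Because $\lamn>0$ forces $f_{\mathrm{d}}(z)=0$ through (\ref{cls:c}), contact persists and $\Phi(q)=0$ holds throughout. By Proposition \ref{uniquepdvar} the multiplier $\mu$, and hence $\ntr=-R(\xi_1)A_1^{\top}\mu_{1,\mathrm{p}}$, is unique and, with the active set held fixed, depends smoothly on $q$ via the implicit function theorem applied to the active-constraint KKT system. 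Thus the only possible source of non-uniqueness is the ECP $p$, which appears solely in the torque components of (\ref{normalform}) and only when a patch contact is present.

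First I would treat the case without patch contact. Here Proposition \ref{uniquepdvar} gives a unique primal $p$, so $n=n(z,\mu,q)$ is a uniquely determined, smooth function of $q$. Differentiating the persistent-contact identity (\ref{contactzerovel}) once in time and substituting $\dot\nu = M^{-1}(f_{\mathrm{v}} + n\lamn)$ yields the scalar relation $n^{\top}M^{-1}n\,\lamn = -\dot n^{\top}\nu - n^{\top}M^{-1}f_{\mathrm{v}}$. Since $M^{-1}$ is positive definite and $\ntr\neq0$ implies $n\neq0$, the coefficient $n^{\top}M^{-1}n$ is strictly positive, so $\lamn$, and hence the force $n\lamn$, is unique.

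The main work is the patch-contact case, where $p$ ranges over the contact segment with vertices $w_1,\dots,w_{n_{\mathrm{v}}}$ (with $n_{\mathrm{v}}=2$ in two dimensions). Using that $n(z,\mu,q)$ is linear in $p$ together with the identity $n(\tilde z_i,\mu,q)=\tilde n_i$, every admissible generalized force $n\lamn$ with $p$ in the patch and $\lamn\geq0$ equals a nonnegative combination $\sum_i \tilde\Lambda_i\tilde n_i$ of the vertex normals. The hypothesis that the active set does not change means the edge--edge contact is preserved, which is equivalent to all patch vertices keeping zero relative normal velocity, i.e.\ $\tilde n_i^{\top}\nu=0$ for $i=1,\dots,n_{\mathrm{v}}$; since $\ntr^{\top}(V_1(w)-V_2(w))$ is affine along the patch, the interior points add nothing and exactly the two endpoint constraints remain. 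Differentiating them and inserting the dynamics gives a linear system $G\tilde\Lambda=b$, where $G_{ij}=\tilde n_i^{\top}M^{-1}\tilde n_j$ is the Delassus matrix and $b$ is determined by the already-unique state. Because distinct vertices $w_i$ and the fact that $\ntr$ is not parallel to the edge make the torque components of the $\tilde n_i$ differ, the $\tilde n_i$ are linearly independent, so $G$ is a positive-definite Gram matrix and $G\tilde\Lambda=b$ has a unique solution. Hence the applied force $\sum_i\tilde\Lambda_i\tilde n_i = n\lamn$ is unique, which in turn singles out a unique ECP $p$.

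Putting the cases together, $\dot\nu$ is a uniquely determined, locally Lipschitz function of $(q,\nu)$ on $(t_1,t_2)$, so the solution of the CLS (\ref{cls}) is unique. The step I expect to be the main obstacle is the patch-contact argument: one must carefully show that the geometric ambiguity of $p$ in the distance problem (\ref{distprob}) collapses to a single dynamically admissible force once the persistence constraints implied by the fixed active set are imposed, and that the relevant projected mass (Delassus) matrix is invertible. The remainder is routine constrained-dynamics bookkeeping.
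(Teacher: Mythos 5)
Your overall architecture coincides with the paper's: on a fixed-mode interval the algebraic variables are eliminated in favour of smooth functions of $(q,\nu,u)$, and Picard--Lindel\"of finishes the job; your non-patch case (unique $(z,\mu)$ from Proposition~\ref{uniquepdvar}, implicit function theorem, one time-differentiation of (\ref{contactzerovel}) and division by $n^{\top}M^{-1}n>0$) is exactly the paper's Case~1. Where you genuinely diverge is the patch-contact case. The paper works intrinsically with the SDF: it differentiates the stationarity identity $R(\xi_1)A_{1,k_1}^{\top}+R(\xi_2)A_{2,k_2}^{\top}=0$ to get $\omega_1=\omega_2$ and hence $\dot\omega_1=\dot\omega_2$, uses the torque balance to express $p\times\ntr\,\lamn$ in terms of $\lamn$, differentiates $n^{\top}\nu=0$ to solve for $\lamn$ explicitly, and must then prove by hand the positivity estimate (\ref{ineqproof}) for the effective mass before recovering $p$ from the active hyperplane equation. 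You instead re-parameterize the admissible force cone by the two vertex normals $\tilde n_1,\tilde n_2$, impose the two endpoint persistence constraints, and invert the $2\times2$ Gram (Delassus) matrix $G_{ij}=\tilde n_i^{\top}M^{-1}\tilde n_j$, whose positive definiteness is immediate from linear independence of the $\tilde n_i$. This buys a much cleaner invertibility argument -- the paper's inequality (\ref{ineqproof}) is precisely the Schur-complement form of $\det G>0$ in a different basis -- at the cost of reintroducing the vertices $w_i$ that the paper deliberately avoids: within a fixed active set of (\ref{distprob}) the patch endpoints can switch between being determined by a vertex of body 1 and a vertex of body 2, so $w_i(q)$, and hence $\dot{\tilde n}_i$, need not be differentiable on $(t_1,t_2)$. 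This is a real but repairable gap: since $\omega_1=\omega_2$ on the interval, the $\dot w_i$ contributions to $\dot{\tilde n}_i^{\top}\nu$ enter only through $(\dot w_i\times\ntr)(\omega_1-\omega_2)=0$, so the right-hand side $b$ of your system $G\tilde\Lambda=b$ is in fact independent of $\dot w_i$ and smooth in the state; you should say this explicitly (or replace the two vertex conditions by the equivalent smooth pair $n^{\top}\nu=0$ and $\omega_1-\omega_2=0$, which is what the paper effectively does). You should also make explicit that $\omega_1=\omega_2$ itself follows from persistence of the stationarity condition, rather than asserting the equivalence of ``active set unchanged'' and ``all vertices have zero normal velocity'' by appeal to geometry. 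With those two points tightened, your argument is a valid and arguably more transparent alternative to the paper's Case~2.
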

A full proof can be found in Appendix \ref{app:proof2}. We briefly outline the key steps of the proof in the following.

Under the assumptions of Proposition \ref{propunique}, condition (\ref{contactzerovel}) applies. Thus, the CLS (\ref{cls}) evolves according to the DAE
\begin{subequations} \label{contactDAE}
	\begin{align} 
		&\dot{q} = \nu, \label{contactDAE:a} \\
		&M \dot{\nu} = f_{\mathrm{v}}(q,\nu,u) + n(z, \mu, q)\lambda_{\mathrm{n}}, \label{contactDAE:b} \\
		&f_{\mathrm{d}}(z) = 0, \\
		&n(z, \mu, q)^{\top}\nu = 0, \label{contactDAE:d}\\
		&\nabla_{z} f_{\mathrm{d}}(z) + \nabla_{z} g_{\mathrm{d}}(z,q) \mu = 0, \label{contactDAE:e}\\
		&0 = \mu_{\mathcal{I}(q)}, \label{contactDAE:f}\\
		&0 = g_{\mathrm{d}}(z,q)_{\mathcal{A}(q)}, \label{contactDAE:g}
	\end{align}
\end{subequations}
where $\mathcal{A}(q) \subset \{1,\dots,\nha+\nhb + 2\}$ and $\mathcal{I}(q) = \{1,\dots,\nha+\nhb + 2\} \setminus \mathcal{A}(q)$ denote the index sets for the active and inactive constraints of (\ref{distprob}).

For those active sets of the SDF without patch contacts, primal and dual variables of the distance problem are unique. The KKT system (\ref{contactDAE:e})-(\ref{contactDAE:g}) then is a smooth system of equations with an invertible Jacobian. The implicit function theorem yields continuously differentiable functions for $z$ and $\mu$ in terms of the differential states and the control input. Differentiating (\ref{contactDAE:d}) with respect to time yields a similar differentiable function for $\lamn$, c.f., \cite{tflag}. Therefore, the DAE (\ref{contactDAE}) can be equivalently stated as an ODE with a differentiable right-hand side, and the Picard-Lindelöf theorem implies uniqueness of solutions.

The more challenging case is the one, where the sets $\mathcal{A}(q)$ and $\mathcal{I}(q)$ are such that patch contact is maintained. Then the distance problem has multiple viable primal solutions. The stationary KKT condition (\ref{contactDAE:e}) implies in this case
\begin{equation} \label{uniquecondrot}
	R(\xi_{1})A^{\top}_{1,k_{1}} + R(\xi_{2})A^{\top}_{2,k_{2}}  = 0,
\end{equation}
where $A^{\top}_{i,j}$ is the $j$-th row of the matrix from the halfspace representation of the $i$-th object and $\{k_{1}\}$,$\{k_{2}\}$, are the singleton active sets of the constraints (\ref{distprob:b}), (\ref{distprob:c}), respectively. By differentiating (\ref{uniquecondrot}) twice with respect to time and by using equality conditions given by (\ref{contactDAE:e}), one obtains
\begin{equation} \label{rotacceq}
\dot{\omega}_{1} = \dot{\omega}_{2}.
\end{equation}
Equation (\ref{rotacceq}) then implies that the force components in (\ref{contactDAE:b}) acting on the rotational DoF have to be equal. Using this, and again differentiating (\ref{contactDAE:d}) with respect to time yields differentiable expressions for $z$, $\mu$ and $\lamn$ in terms of the differential states and the control input, which again allows use of the Picard-Lindelöf theorem.

We note that since the system velocity is obtained from integration of the right-hand side of (\ref{contactDAE:b}), switches of the SDF active set do not hinder uniqueness of state trajectories. Overall, one can conclude that state trajectories for the CLS (\ref{cls}) evolve uniquely, as the considered impact law provides unique post-impact velocities and for every fixed set of the complementarities (\ref{cls:c}) and (\ref{cls:e}), unique evolution is ensured by the Picard-Lindelöf theorem. In particular, ECP which parameterize impulse and force interactions are well-defined.
\section{FINITE ELEMENTS WITH SWITCH DETECTION AND STATE JUMPS} \label{secfesd}
In this section, we adapt the FESD-J \cite{fesdj} method to the considered CLS (\ref{cls}). To this end, in Section \ref{sec:rki} a standard RK discretization is stated. The discretized impact law is discussed in Section \ref{sec:discimp}. In Section \ref{sec:crosscomp}, so-called cross complementarity conditions are formulated, which ensure that dynamic switches are detected exactly in time. Finally, in Section \ref{sec:discopt} the derived discretization scheme is integrated in a discrete time optimal control problem (OCP).

\subsection{Runge-Kutta Integration} \label{sec:rki}
We consider a control interval $[0,T]$, with a constant control input $\hat{u} \in \mathbb{R}^{n_{\mathrm{u}}}$, that is divided into $N_{\mathrm{FE}}$ \textit{finite elements} (FE) $[t_{n}, t_{n+1}]$, with $t_{n} < t_{n+1}$ and $\cup_{n = 0}^{N_{\mathrm{FE}}} [t_{n}, t_{n+1}] = [0,T]$. On each FE, we consider a $n_{\mathrm{s}}$-stage RK method that is parameterized by its Butcher tableau entries $\hat{a}_{i,j}, \hat{b}_{i}, \hat{c}_{i}$, with $i,j \in \{1,\dots,n_{\mathrm{s}} \}$ \cite{hairer}. We denote the FE lengths by $h_{n} = t_{n+1} - t_{n}$. Approximations of the differential and algebraic states for the CLS (\ref{cls}) at time $t_{n,i} = t_{n} + \hat{c}_{i} h_{n}$ are denoted by $q_{n,i}$, $\nu_{n,i}$, $\lambda_{\mathrm{n},n,i}$, $z_{n,i}$ and $\mu_{n,i}$, respectively. For the differential states, we additionally introduce the values $q_{n,0}$, $\nu_{n,0}$, which approximate the states at $q(t_{\mathrm{n}}^{+})$, $\nu(t_{\mathrm{n}}^{+})$, while $q_{n-1,n_{\mathrm{s}}}$, $\nu_{n-1,n_{\mathrm{s}}}$, approximate $q(t_{\mathrm{n}}^{-})$, $\nu(t_{\mathrm{n}}^{-})$. For ease of exposition, we assumed that it holds $\hat{c}_{n_{\mathrm{s}}} = 1$, c.f., \cite{fesdj} for the case $\hat{c}_{n_{\mathrm{s}}} < 1$. An example for RK methods that fulfill $\hat{c}_{n_{\mathrm{s}}} = 1$ is given by Radau IIA schemes. Introducing additional variables for the differential states is required as due to possible state jumps in velocity for the CLS (\ref{cls}), we may have $\nu(t_{\mathrm{s}}^{-}) \neq \nu(t_{\mathrm{s}}^{+})$ for some $t_{\mathrm{s}}$. Now let
\begin{equation*}
F_{\mathrm{v}}(q,\nu,z,\mu,\lambda_{\mathrm{n}},u) = M^{-1}( f_{\mathrm{v}}(q,\nu,u) + n(z,\mu,q)\lambda_{\mathrm{n}}),
\end{equation*}
then the discretized differential equations of (\ref{cls:a})-(\ref{cls:b}) read
\begin{equation} \label{rkdiff}
\begin{split}
 q_{n,i} &= q_{n,0} + h_{n} \sum _{j = 1}^{n_{\mathrm{s}}} \hat{a}_{i,j} \nu_{n,j}, \\
 \nu_{n,i} &= \nu_{n,0} + h_{n} \sum _{j = 1}^{n_{\mathrm{s}}} \hat{a}_{i,j} F_{\mathrm{v}}(q_{n,j},\nu_{n,j},z_{n,j},\mu_{n,j},\lambda_{\mathrm{n},n,j},\hat{u}),
\end{split}
\end{equation}
where an initial state $s_{0} = (q_{0,0},\nu_{0,0})$ at time $t_{0}$ has to be provided. The algebraic conditions in (\ref{cls}) are also evaluated at the RK stage points and read
\begin{subequations} \label{rkalg}
\begin{align}
	&0 \leq  f_{\mathrm{d}}(z_{n,i})  \perp \lambda_{\mathrm{n},n,i} \geq 0,  \label{rkalg:a} \\
	&\nabla_{z} f_{\mathrm{d}}(z_{n,i}) + \nabla_{z} g_{\mathrm{d}}(z_{n,i},q_{n,i}) \mu_{n,i} = 0, \\
	&0 \leq \mu_{n,i} \perp -g_{\mathrm{d}}(z_{n,i},q_{n,i}) \geq 0. \label{rkalg:c}
\end{align}
\end{subequations}

\subsection{Impact Equations} \label{sec:discimp}
We observe that the configurations are continuous in time as they are obtained by integration of the velocities (\ref{cls:a}). Therefore, we impose
\begin{equation*}
q_{n-1,n_{\mathrm{s}}} = q_{n,0}.
\end{equation*}
To connect the velocities at FE boundaries, the impact law (\ref{impactres})-(\ref{expeulerstep}) is incorporated into the discretization. To this end, two evaluations of the SDF at FE boundaries are required. This is achieved by evaluating the KKT conditions of (\ref{distprob}),
\begin{equation} \label{impKKT}
	\begin{split}
		&\nabla_{z} f_{\mathrm{d}}(z_{\mathrm{I},n}) + \nabla_{z} g_{\mathrm{d}}(z_{\mathrm{I},n}, q_{n,0}) \mu_{\mathrm{I},n} = 0, \\
		&0 \leq \mu_{\mathrm{I},n} \perp -g_{\mathrm{d}}(z_{\mathrm{I},n},q_{n,0}) \geq 0, \\
		&\nabla_{z} f_{\mathrm{d}}(z_{\mathrm{E},n}) + \nabla_{z} g_{\mathrm{d}}(z_{\mathrm{E},n}, q_{n,0} + h_{\mathrm{E}}\nu_{n,0} ) \mu_{\mathrm{E},n} = 0, \\
		&0 \leq \mu_{\mathrm{E},n} \perp -g_{\mathrm{d}}(z_{\mathrm{E},n},q_{n,0} + h_{\mathrm{E}}\nu_{n,0}) \geq 0,
	\end{split}
\end{equation}
with a small constant $h_{\mathrm{E}} > 0$. Thus, $(z_{\mathrm{I},n},\mu_{\mathrm{I},n})$ solves (\ref{distprob}) for $q_{n,0}$ and $(z_{\mathrm{E},n},\mu_{\mathrm{E},n})$ solves (\ref{distprob}) for $q_{n,0} + h_{\mathrm{E}}\nu_{n,0}$. 
The impact equations are given by
\begin{subequations} \label{discimpeq}
\begin{align}
\nu_{n,0} &= \nu_{n-1,n_{\mathrm{s}}} + M^{-1}n(z_{\mathrm{I},n},\mu_{\mathrm{I},n},q_{n,0}) \Lambda_{\mathrm{n},n}, \label{discimpeq:a}\\
0 &= \Lambda_{\mathrm{n},n} n(z_{\mathrm{I},n},\mu_{\mathrm{I},n},q_{n,0})^{\top} \nu_{n,0}, \\
0 &\leq f_{\mathrm{d}}(z_{\mathrm{E},n}), \label{discimpeq:c} \\
0 &\leq \Lambda_{\mathrm{n},n} \perp f_{\mathrm{d}}(z_{\mathrm{I},n}) \geq 0. \label{discimpeq:d} 
\end{align}
\end{subequations}
If $f_{\mathrm{d}}(z_{\mathrm{I},n}) = 0$ and there exist $(z_{\mathrm{I},n},\mu_{\mathrm{I},n})$ with $n(z_{\mathrm{I},n},\mu_{\mathrm{I},n},q_{n,0})^{\top} \nu_{n-1,n_{\mathrm{s}}} < 0$, then it holds $\Lambda_{\mathrm{n},n} > 0 $ as otherwise (\ref{discimpeq:c}) will be violated. In this case, (\ref{discimpeq:a})-(\ref{discimpeq:c}) encode precisely the impact law discussed in Section \ref{section1imp}. Otherwise, we have $\Lambda_{\mathrm{n},n} = 0$ and (\ref{discimpeq:a}) encodes continuity of the velocities across the FE boundary.

We collect all discrete time differential variables $q_{n,i}$, $\nu_{n,i}$, in the vector $\bm{\mathrm{x}}$, discrete time algebraic variables $z_{n,j},\mu_{n,j},\lambda_{\mathrm{n},n,j},z_{\mathrm{I},n},\mu_{\mathrm{I},n},z_{\mathrm{E},n},\mu_{\mathrm{E},n}$, in the vector $\bm{\mathrm{z}}$ and FE lengths in the vector $\bm{\mathrm{h}}$ such that the conditions (\ref{rkdiff}), (\ref{rkalg}), (\ref{impKKT}), (\ref{discimpeq}), are summarized by $G_{\mathrm{rk}}(\bm{\mathrm{x}},\bm{\mathrm{z}},\bm{\mathrm{h}},s_{0},\hat{u}) = 0.$

Note that complementarity conditions can be equivalently denoted by so-called C-functions $\Psi(\cdot,\cdot)$, which have the property: $0 \leq a \perp b \geq 0 \Leftrightarrow \Psi(a,b) = 0$.
\begin{figure*}
	\captionsetup[subfigure]{justification=centering}
	\centering
	\begin{subfigure}[t]{0.18\textwidth}
		\centering
\begin{tikzpicture}

\tikzstyle{every node}=[font=\scriptsize]

\definecolor{darkgray176}{RGB}{176,176,176}
\definecolor{gray}{RGB}{150,150,150}

\begin{axis}[
width=1.4\textwidth,
height=1.01\textwidth,
tick align=outside,
tick pos=left,
x grid style={darkgray176},
xlabel= {\(\displaystyle c_{\mathrm{x}}\)},
xmin=-1.5, xmax=3.5,
xtick = {-1,0,1,2,3},
xtick style={color=black},
y grid style={darkgray176},
ylabel= {\(\displaystyle c_{\mathrm{y}}\)},
ymin=1, ymax=4,
ytick style={color=black},
x label style={at={(axis description cs:0.5,-0.27)},anchor=north},
y label style={at={(axis description cs:-0.12,.5)}, anchor=south},
]
\path [draw=black, fill=mediumcyan, opacity=0.2]
(axis cs:-1.13767236989995,3.11852237946576)
--(axis cs:0.789613224465101,3.50920251073486)
.. controls (axis cs:0.920288768177266,3.53569175484086) and (axis cs:0.998871162086353,3.48359860503778) .. (axis cs:1.02536040619236,3.35292306132562)
--(axis cs:1.13395181930918,2.8172248022615)
.. controls (axis cs:1.16044106341519,2.68654925854933) and (axis cs:1.10834791361211,2.60796686464025) .. (axis cs:0.977672369899947,2.58147762053424)
--(axis cs:-0.949613224465101,2.19079748926515)
.. controls (axis cs:-1.08028876817727,2.16430824515914) and (axis cs:-1.15887116208635,2.21640139496222) .. (axis cs:-1.18536040619236,2.34707693867438)
--(axis cs:-1.29395181930918,2.8827751977385)
.. controls (axis cs:-1.32044106341519,3.01345074145067) and (axis cs:-1.26834791361211,3.09203313535975) .. (axis cs:-1.13767236989995,3.11852237946576);
\path [draw=black, fill=gray, opacity=0.2]
(axis cs:-2.16648435722623,2.18324217861312)
--(axis cs:2.16648435722623,2.18324217861312)
.. controls (axis cs:2.29981769055957,2.18324217861312) and (axis cs:2.36648435722623,2.11657551194645) .. (axis cs:2.36648435722623,1.98324217861312)
--(axis cs:2.36648435722623,0.0167578213868838)
.. controls (axis cs:2.36648435722623,-0.11657551194645) and (axis cs:2.29981769055957,-0.183242178613116) .. (axis cs:2.16648435722623,-0.183242178613116)
--(axis cs:-2.16648435722623,-0.183242178613116)
.. controls (axis cs:-2.29981769055957,-0.183242178613116) and (axis cs:-2.36648435722623,-0.11657551194645) .. (axis cs:-2.36648435722623,0.0167578213868838)
--(axis cs:-2.36648435722623,1.98324217861312)
.. controls (axis cs:-2.36648435722623,2.11657551194645) and (axis cs:-2.29981769055957,2.18324217861312) .. (axis cs:-2.16648435722623,2.18324217861312);
\path [draw=black, fill=mediumcyan]
(axis cs:-0.943520994591018,3.08309116650045)
--(axis cs:0.624585529954969,3.40096209577255)
.. controls (axis cs:0.755261073667134,3.42745133987855) and (axis cs:0.833843467576221,3.37535819007548) .. (axis cs:0.860332711682229,3.24468264636331)
--(axis cs:0.939800444000254,2.85265601522681)
.. controls (axis cs:0.966289688106262,2.72198047151465) and (axis cs:0.914196538303183,2.64339807760556) .. (axis cs:0.783520994591018,2.61690883349955)
--(axis cs:-0.784585529954969,2.29903790422745)
.. controls (axis cs:-0.915261073667134,2.27254866012145) and (axis cs:-0.993843467576221,2.32464180992452) .. (axis cs:-1.02033271168223,2.45531735363669)
--(axis cs:-1.09980044400025,2.84734398477319)
.. controls (axis cs:-1.12628968810626,2.97801952848535) and (axis cs:-1.07419653830318,3.05660192239444) .. (axis cs:-0.943520994591018,3.08309116650045);
\path [draw=black, fill=gray]
(axis cs:-1.8,2)
--(axis cs:1.8,2)
.. controls (axis cs:1.93333333333333,2) and (axis cs:2,1.93333333333333) .. (axis cs:2,1.8)
--(axis cs:2,0.2)
.. controls (axis cs:2,0.0666666666666667) and (axis cs:1.93333333333333,0) .. (axis cs:1.8,0)
--(axis cs:-1.8,0)
.. controls (axis cs:-1.93333333333333,0) and (axis cs:-2,0.0666666666666667) .. (axis cs:-2,0.2)
--(axis cs:-2,1.8)
.. controls (axis cs:-2,1.93333333333333) and (axis cs:-1.93333333333333,2) .. (axis cs:-1.8,2);

\addplot [semithick, mediumblue, opacity=0.2, mark=*, mark size=2.5, mark options={solid}, only marks]
table {%
-0.982260048211744 2.18324217659434
};
\addplot [semithick, mediumorange, opacity=0.2, mark=*, mark size=1.3, mark options={solid}, only marks]
table {%
-0.982260048211744 2.18324217659434
};
\end{axis}

\end{tikzpicture}
		\vspace{-1.5em}
		\caption{$t=0$}
		\label{simvis:a}
	\end{subfigure}
	\hspace{0.6em}
	\hfill
	\begin{subfigure}[t]{0.18\textwidth}
		\centering
\begin{tikzpicture}

\tikzstyle{every node}=[font=\scriptsize]

\definecolor{darkgray176}{RGB}{176,176,176}
\definecolor{gray}{RGB}{150,150,150}

\begin{axis}[
width=1.4\textwidth,
height=1.01\textwidth,
tick align=outside,
tick pos=left,
x grid style={darkgray176},
xlabel= {\(\displaystyle c_{\mathrm{x}}\)},
xmin=-1.5, xmax=3.5,
xtick = {-1,0,1,2,3},
xtick style={color=black},
y grid style={darkgray176},
yticklabels={,,},
ymin=1, ymax=4,
ytick style={color=black},
x label style={at={(axis description cs:0.5,-0.27)},anchor=north},
y label style={at={(axis description cs:-0.12,.5)}, anchor=south},
]
\path [draw=black, fill=mediumcyan]
(axis cs:-0.526950006204217,2.78803998100841)
--(axis cs:1.04115651894029,3.1059109073279)
.. controls (axis cs:1.17183206270234,3.13240015118786) and (axis cs:1.25041445651334,3.08030700123682) .. (axis cs:1.27690370037329,2.94963145747478)
--(axis cs:1.35637143195317,2.55760482618865)
.. controls (axis cs:1.38286067581312,2.42692928242661) and (axis cs:1.33076752586208,2.34834688861561) .. (axis cs:1.20009198210004,2.32185764475565)
--(axis cs:-0.368014543044472,2.00398671843616)
.. controls (axis cs:-0.498690086806514,1.9774974745762) and (axis cs:-0.577272480617514,2.02959062452724) .. (axis cs:-0.603761724477472,2.16026616828929)
--(axis cs:-0.683229456057345,2.55229279957541)
.. controls (axis cs:-0.709718699917302,2.68296834333746) and (axis cs:-0.657625549966259,2.76155073714846) .. (axis cs:-0.526950006204217,2.78803998100841);
\path [draw=black, fill=gray]
(axis cs:-1.8,2)
--(axis cs:1.8,2)
.. controls (axis cs:1.93333333333333,2) and (axis cs:2,1.93333333333333) .. (axis cs:2,1.8)
--(axis cs:2,0.2)
.. controls (axis cs:2,0.0666666666666667) and (axis cs:1.93333333333333,0) .. (axis cs:1.8,0)
--(axis cs:-1.8,0)
.. controls (axis cs:-1.93333333333333,0) and (axis cs:-2,0.0666666666666667) .. (axis cs:-2,0.2)
--(axis cs:-2,1.8)
.. controls (axis cs:-2,1.93333333333333) and (axis cs:-1.93333333333333,2) .. (axis cs:-1.8,2);
\addplot [semithick, mediumred, mark=diamond*, mark size=4, mark options={solid}, only marks]
table {%
-0.407748427403386 2.00000002148254
};
\addplot [semithick, mediumblue, mark=*, mark size=2.5, mark options={solid}, only marks]
table {%
-0.403573220542818 2.00000000185421
};
\addplot [semithick, mediumorange, mark=*, mark size=1.3, mark options={solid}, only marks]
table {%
-0.403573220542818 2.00000000185421
};
\end{axis}

\end{tikzpicture}
		\vspace{-1.5em}
		\caption{$t=0.42$}
		\label{simvis:b}
	\end{subfigure}
	\hfill
	\begin{subfigure}[t]{0.18\textwidth}
		\centering
\begin{tikzpicture}

\tikzstyle{every node}=[font=\scriptsize]

\definecolor{darkgray176}{RGB}{176,176,176}
\definecolor{gray}{RGB}{150,150,150}

\begin{axis}[
width=1.4\textwidth,
height=1.01\textwidth,
tick align=outside,
tick pos=left,
x grid style={darkgray176},
xlabel= {\(\displaystyle c_{\mathrm{x}}\)},
xmin=-1.5, xmax=3.5,
xtick = {-1,0,1,2,3},
xtick style={color=black},
y grid style={darkgray176},
yticklabels={,,},
ymin=1, ymax=4,
ytick style={color=black},
x label style={at={(axis description cs:0.5,-0.27)},anchor=north},
y label style={at={(axis description cs:-0.12,.5)}, anchor=south},
]
\path [draw=black, fill=mediumcyan]
(axis cs:-0.214985485971947,2.79999999133572)
--(axis cs:1.38501451402805,2.7999999856438)
.. controls (axis cs:1.51834784736139,2.79999998516947) and (axis cs:1.58501451379089,2.73333331826564) .. (axis cs:1.58501451331656,2.59999998493231)
--(axis cs:1.58501451189358,2.19999998493231)
.. controls (axis cs:1.58501451141926,2.06666665159898) and (axis cs:1.51834784451543,1.99999998516947) .. (axis cs:1.38501451118209,1.9999999856438)
--(axis cs:-0.214985488817906,1.99999999133572)
.. controls (axis cs:-0.348318822151239,1.99999999181005) and (axis cs:-0.414985488580743,2.06666665871388) .. (axis cs:-0.414985488106416,2.19999999204721)
--(axis cs:-0.414985486683437,2.59999999204721)
.. controls (axis cs:-0.41498548620911,2.73333332538054) and (axis cs:-0.348318819305281,2.79999999181005) .. (axis cs:-0.214985485971947,2.79999999133572);
\path [draw=black, fill=gray]
(axis cs:-1.8,2)
--(axis cs:1.8,2)
.. controls (axis cs:1.93333333333333,2) and (axis cs:2,1.93333333333333) .. (axis cs:2,1.8)
--(axis cs:2,0.2)
.. controls (axis cs:2,0.0666666666666667) and (axis cs:1.93333333333333,0) .. (axis cs:1.8,0)
--(axis cs:-1.8,0)
.. controls (axis cs:-1.93333333333333,0) and (axis cs:-2,0.0666666666666667) .. (axis cs:-2,0.2)
--(axis cs:-2,1.8)
.. controls (axis cs:-2,1.93333333333333) and (axis cs:-1.93333333333333,2) .. (axis cs:-1.8,2);
\addplot [semithick, mediumred, mark=diamond*, mark size=4, mark options={solid}, only marks]
table {%
1.06834805332321 1.99999999338486
};
\addplot [semithick, mediumblue, mark=*, mark size=2.5, mark options={solid}, only marks]
table {%
0.596917719382176 1.9999999993419
};

\addplot [semithick, mediumorange, mark=*, mark size=1.3, mark options={solid}, only marks]
table {%
-0.214985465670403 1.99999999449103
};

\end{axis}

\end{tikzpicture}
		\vspace{-1.5em}
		\caption{$t=0.67$}
		\label{simvis:c}
	\end{subfigure}
	\hfill
	\begin{subfigure}[t]{0.18\textwidth}
		\centering
\begin{tikzpicture}

\tikzstyle{every node}=[font=\scriptsize]

\definecolor{darkgray176}{RGB}{176,176,176}
\definecolor{gray}{RGB}{150,150,150}

\begin{axis}[
width=1.4\textwidth,
height=1.01\textwidth,
tick align=outside,
tick pos=left,
x grid style={darkgray176},
xlabel= {\(\displaystyle c_{\mathrm{x}}\)},
xmin=-1.5, xmax=3.5,
xtick = {-1,0,1,2,3},
xtick style={color=black},
y grid style={darkgray176},
yticklabels={,,},
ymin=1, ymax=4,
ytick style={color=black},
x label style={at={(axis description cs:0.5,-0.27)},anchor=north},
y label style={at={(axis description cs:-0.12,.5)}, anchor=south},
]
\path [draw=black, fill=mediumcyan]
(axis cs:0.99827775333835,2.79999998664617)
--(axis cs:2.59827775333835,2.80000001443445)
.. controls (axis cs:2.73161108667168,2.80000001675014) and (axis cs:2.79827775449619,2.73333335124132) .. (axis cs:2.79827775681189,2.60000001790799)
--(axis cs:2.79827776375896,2.20000001790799)
.. controls (axis cs:2.79827776607465,2.06666668457466) and (axis cs:2.73161110056583,2.00000001675014) .. (axis cs:2.59827776723249,2.00000001443445)
--(axis cs:0.998277767232494,1.99999998664617)
.. controls (axis cs:0.864944433899161,1.99999998433047) and (axis cs:0.798277766074649,2.0666666498393) .. (axis cs:0.798277763758958,2.19999998317263)
--(axis cs:0.798277756811886,2.59999998317263)
.. controls (axis cs:0.798277754496195,2.73333331650596) and (axis cs:0.864944420005016,2.79999998433047) .. (axis cs:0.99827775333835,2.79999998664617);
\path [draw=black, fill=gray]
(axis cs:-1.8,2)
--(axis cs:1.8,2)
.. controls (axis cs:1.93333333333333,2) and (axis cs:2,1.93333333333333) .. (axis cs:2,1.8)
--(axis cs:2,0.2)
.. controls (axis cs:2,0.0666666666666667) and (axis cs:1.93333333333333,0) .. (axis cs:1.8,0)
--(axis cs:-1.8,0)
.. controls (axis cs:-1.93333333333333,0) and (axis cs:-2,0.0666666666666667) .. (axis cs:-2,0.2)
--(axis cs:-2,1.8)
.. controls (axis cs:-2,1.93333333333333) and (axis cs:-1.93333333333333,2) .. (axis cs:-1.8,2);
\addplot [semithick, mediumblue, mark=*, mark size=2.2, mark options={solid}, only marks]
table {%
1.79942311767792 2.00000001045775
};
\addplot [semithick, mediumorange, mark=*, mark size=1.3, mark options={solid}, only marks]
table {%
1.79942311767792 2.00000001045775
};
\end{axis}

\end{tikzpicture}
		\vspace{-1.5em}
		\caption{$t=1.88$}
		\label{simvis:d}
	\end{subfigure}
	\hfill
	\begin{subfigure}[t]{0.18\textwidth}
		\centering
\begin{tikzpicture}

\tikzstyle{every node}=[font=\scriptsize]

\definecolor{darkgray176}{RGB}{176,176,176}
\definecolor{gray}{RGB}{150,150,150}

\begin{axis}[
width=1.4\textwidth,
height=1.01\textwidth,
tick align=outside,
tick pos=left,
x grid style={darkgray176},
xlabel= {\(\displaystyle c_{\mathrm{x}}\)},
xmin=-1.5, xmax=3.5,
xtick = {-1,0,1,2,3},
xtick style={color=black},
y grid style={darkgray176},
yticklabels={,,},
ymin=1, ymax=4,
ytick style={color=black},
x label style={at={(axis description cs:0.5,-0.27)},anchor=north},
y label style={at={(axis description cs:-0.12,.5)}, anchor=south},
]
\path [draw=black, fill=mediumcyan, opacity=0.2]
(axis cs:1.94144400910807,2.78996792408914)
--(axis cs:3.52545242491969,2.56356340738226)
.. controls (axis cs:3.65744431848148,2.54469762322809) and (axis cs:3.71400737318529,2.46926878437012) .. (axis cs:3.69514158903113,2.33727689080833)
--(axis cs:3.63853819382587,1.9412589328949)
.. controls (axis cs:3.61967240967171,1.80926703933311) and (axis cs:3.54424357081373,1.7527039846293) .. (axis cs:3.41225167725195,1.77156976878346)
--(axis cs:1.82824326144033,1.99797428549034)
.. controls (axis cs:1.69625136787854,2.01684006964451) and (axis cs:1.63968831317473,2.09226890850248) .. (axis cs:1.65855409732889,2.22426080206427)
--(axis cs:1.71515749253415,2.6202787599777)
.. controls (axis cs:1.73402327668831,2.75227065353949) and (axis cs:1.80945211554629,2.8088337082433) .. (axis cs:1.94144400910807,2.78996792408914);
\path [draw=black, fill=gray, opacity=0.2]
(axis cs:-1.80010676723377,2.00005338361689)
--(axis cs:1.80010676723377,2.00005338361689)
.. controls (axis cs:1.93344010056711,2.00005338361689) and (axis cs:2.00010676723377,1.93338671695022) .. (axis cs:2.00010676723377,1.80005338361689)
--(axis cs:2.00010676723377,0.199946616383114)
.. controls (axis cs:2.00010676723377,0.0666132830497803) and (axis cs:1.93344010056711,-5.33836168863377e-05) .. (axis cs:1.80010676723377,-5.33836168863377e-05)
--(axis cs:-1.80010676723377,-5.33836168863377e-05)
.. controls (axis cs:-1.93344010056711,-5.33836168863377e-05) and (axis cs:-2.00010676723377,0.0666132830497803) .. (axis cs:-2.00010676723377,0.199946616383114)
--(axis cs:-2.00010676723377,1.80005338361689)
.. controls (axis cs:-2.00010676723377,1.93338671695022) and (axis cs:-1.93344010056711,2.00005338361689) .. (axis cs:-1.80010676723377,2.00005338361689);
\path [draw=black, fill=mediumcyan]
(axis cs:1.94149383427178,2.78993923204665)
--(axis cs:3.52539655701323,2.56354982219667)
.. controls (axis cs:3.65738845057502,2.54468403804251) and (axis cs:3.71395150527883,2.46925519918453) .. (axis cs:3.69508572112466,2.33726330562275)
--(axis cs:3.63848836866217,1.94128762493738)
.. controls (axis cs:3.619622584508,1.8092957313756) and (axis cs:3.54419374565003,1.75273267667178) .. (axis cs:3.41220185208824,1.77159846082595)
--(axis cs:1.82829912934679,1.99798787067593)
.. controls (axis cs:1.696307235785,2.01685365483009) and (axis cs:1.63974418108119,2.09228249368807) .. (axis cs:1.65860996523536,2.22427438724985)
--(axis cs:1.71520731769785,2.62025006793522)
.. controls (axis cs:1.73407310185202,2.752241961497) and (axis cs:1.80950194070999,2.80880501620081) .. (axis cs:1.94149383427178,2.78993923204665);
\path [draw=black, fill=gray]
(axis cs:-1.8,2)
--(axis cs:1.8,2)
.. controls (axis cs:1.93333333333333,2) and (axis cs:2,1.93333333333333) .. (axis cs:2,1.8)
--(axis cs:2,0.2)
.. controls (axis cs:2,0.0666666666666667) and (axis cs:1.93333333333333,0) .. (axis cs:1.8,0)
--(axis cs:-1.8,0)
.. controls (axis cs:-1.93333333333333,0) and (axis cs:-2,0.0666666666666667) .. (axis cs:-2,0.2)
--(axis cs:-2,1.8)
.. controls (axis cs:-2,1.93333333333333) and (axis cs:-1.93333333333333,2) .. (axis cs:-1.8,2);
\addplot [semithick, mediumblue, mark=*, mark size=2.2, mark options={solid}, only marks]
table {%
1.83379657644112 1.99719545225418
};
\addplot [semithick, mediumorange, mark=*, mark size=1.3, mark options={solid}, only marks]
table {%
1.83379657644112 1.99719545225418
};
\end{axis}

\end{tikzpicture}
		\vspace{-1.5em}
		\caption{$t=2.75$}
		\label{simvis:e}
	\end{subfigure}
	
	\caption{System configurations for the simulation example. For detailed velocity profiles, cf. Fig. \ref{simplot}. Orange and blue dots represent the approximation for $p(t^{-})$ and $p(t^{+})$, respectively. Red markers represent the ECP used for impulse resolution.}
	\vspace{-1em}
	\label{simvis}
\end{figure*}
\subsection{Cross Complementarities} \label{sec:crosscomp}
To maintain the high-order accuracy of RK methods, we need to ensure that active set changes of (\ref{rkalg:a}) and (\ref{rkalg:c}) do only occur on FE boundaries. To this end, in \cite{fesd_standard} the concept of \textit{cross complementarity} conditions was introduced. To adopt these conditions to the here considered model, we first observe that $z_{n,j}$ contains the discrete time scaling factor $\alpha_{n,j}$ and the ECP $p_{n,j}$. As noted in Section \ref{section1a}, the optimal scaling factor and optimal dual variables of the distance problem (\ref{distprob}) are uniquely determined by the system configuration. Since the latter is continuous in time, it motivates to define $ \alpha_{n,0} = \alpha_{n-1,n_{\mathrm{s}}}$ and $\mu_{n,0} = \mu_{n-1,n_{\mathrm{s}}}$. On the other hand, for the ECP it can happen that we have $p(t_{\mathrm{n}}^{-}) \neq p(t_{\mathrm{n}}^{+})$ and, consequentially, $z(t_{\mathrm{n}}^{-}) \neq z(t_{\mathrm{n}}^{+})$. Given this observation, we now utilize the continuity of $\alpha(t)$ and $\mu(t)$ to fix the active sets in (\ref{rkalg}). The cross complementarity conditions for (\ref{rkalg}) formulate as
\begin{equation} \label{crosscomps}
	\begin{split}
		0 &= \lambda_{\mathrm{n},n,j} (\alpha_{n,j^{\prime}} - 1), \ j = 1,\dots, n_{\mathrm{s}}, \ j^{\prime} = 0,\dots, n_{\mathrm{s}}, \\
		0 &= \mu_{n,j^{\prime}}^{\top}g_{\mathrm{d}}(z_{n,j},q_{n,j}), \ j = 1,\dots, n_{\mathrm{s}}, \ j^{\prime} = 0,\dots, n_{\mathrm{s}}.
	\end{split}
\end{equation}
Including $j^{\prime} = 0$ is a crucial component of the discretization scheme, as this forces $h_{n}$ to adapt such that dynamic switches only occur at FE boundaries. The conditions (\ref{crosscomps}) now ensure that active sets are fixed within a FE, cf. \cite{fesdj,fesd_standard}. The conditions (\ref{crosscomps}) are denoted by $G_{\mathrm{cross}}(\bm{\mathrm{x}},\bm{\mathrm{z}},s_{0}) = 0.$

\subsection{Direct Optimal Control with Exact Switch Detection} \label{sec:discopt}
We compactly denote evaluation of one control interval by
\begin{align*}
s_{1} &= F_{\mathrm{fesd}}(\bm{\mathrm{x}}), \\
0 &= G_{\mathrm{fesd}}(\bm{\mathrm{x}},\bm{\mathrm{z}},\bm{\mathrm{h}},s_{0},\hat{u}, T) = \begin{pmatrix}
G_{\mathrm{rk}}(\bm{\mathrm{x}},\bm{\mathrm{z}},\bm{\mathrm{h}},s_{0},\hat{u}) \\
G_{\mathrm{cross}}(\bm{\mathrm{x}},\bm{\mathrm{z}},s_{0})\\
T - \sum_{n = 0}^{N_{\mathrm{FE}} - 1}h_{n}
\end{pmatrix}, \\
\end{align*}
where $F_{\mathrm{fesd}}(\bm{\mathrm{x}}) = (q_{N_{\mathrm{FE}} - 1, n_{\mathrm{s}}},\nu_{N_{\mathrm{FE}} - 1, n_{\mathrm{s}}})$ evaluates the system state at time $T$ and a condition was added that ensures that all FE sum up to the length of the control interval.
\begin{figure}
	\centering
	\vspace{0.9em}
	\input{figures/simplot.pgf}
	\caption{Evolution of differential and algebraic states for the simulation example. Active set changes of the force complementarity (\ref{cls:c}) and the SDF complementarity (\ref{cls:e}) are marked by purple and orange dotted lines, respectively.}
	\vspace{-1.4em}
	\label{simplot}
\end{figure}

A discrete time OCP for a time-horizon $[0,T]$ split into $N$ equidistant control intervals with corresponding constant controls $\bm{\mathrm{u}} = (\hat{u}_{0},\dots, \hat{u}_{N-1})$ is given by

\begin{mini}
{\bm{\mathrm{s}},\bm{\mathrm{u}}, \mathcal{X}, \mathcal{Z}, \mathcal{H}}{\sum_{k = 0}^{N -1} \hat{L}(s_{k},\bm{\mathrm{x}}_{k},\hat{u}_{k}) + C_{\mathrm{h}}(\bm{\mathrm{h}}_{k}) + L_{\mathrm{t}}(s_{N}),} {\label{ocp}}{}
\addConstraint{s_{0}= \bar{x}_{0},}{}
\addConstraint{s_{k+1}= F_{\mathrm{fesd}}(\bm{\mathrm{x}}_{k}),}{}
\addConstraint{0= G_{\mathrm{fesd}}(\bm{\mathrm{x}}_{k},\bm{\mathrm{z}}_{k},\bm{\mathrm{h}}_{k},s_{k},\hat{u}_{k}, \frac{T}{N}),}{}
\addConstraint{0 \geq G_{\mathrm{p}}(s_{k}, \hat{u}_{k}),}{}
\end{mini}
where $\bm{\mathrm{s}} = (s_{0},\dots,s_{N-1})$, $\mathcal{X} = (\bm{\mathrm{x}}_{0},\dots,\bm{\mathrm{x}}_{N-1})$, $\mathcal{Z} = (\bm{\mathrm{z}}_{0},\dots,\bm{\mathrm{z}}_{N-1})$ and $\mathcal{H} = (\bm{\mathrm{h}}_{0},\dots,\bm{\mathrm{h}}_{N-1})$. The function $\hat{L}: \mathbb{R}^{n_{x}} \times \mathbb{R}^{N_{\mathrm{FE}}(n_{\mathrm{s}} + 1)n_{x}} \times \mathbb{R}^{n_{u}} \rightarrow \mathbb{R}$ is the running cost, $L_{\mathrm{t}}: \mathbb{R}^{n_{x}} \rightarrow \mathbb{R}$ is the terminal cost, $G_{\mathrm{p}}: \mathbb{R}^{n_{x}}\times \mathbb{R}^{n_{u}} \rightarrow \mathbb{R}$ denotes path constraints and $\bar{x}_{0} \in \mathbb{R}^{n_{x}}$ is a given initial state.  The cost function
\begin{equation*}
	C_{\mathrm{h}}(\bm{\mathrm{h}}_{k}) = \sum_{n = 0}^{N_{\mathrm{FE}} - 1}\Vert h_{k,n} - \frac{T}{N \cdot N_{\mathrm{FE}}} \Vert_{2}^{2}
\end{equation*}
restricts spurious DoF corresponding to FE lengths. See \cite{fesd_standard} for an elaborate discussion on step equilibrium.

\begin{figure*}
	\captionsetup[subfigure]{justification=centering}
	\centering
	\begin{subfigure}[t]{0.18\textwidth}
		\centering
\begin{tikzpicture}
\tikzstyle{every node}=[font=\scriptsize]
\definecolor{darkgray176}{RGB}{176,176,176}

\begin{axis}[
tick align=outside,
tick pos=left,
x grid style={darkgray176},
xmin=-6, xmax=8,
xtick style={color=black},
y grid style={darkgray176},
ymin=-4, ymax=7,
width=1.221\textwidth,
height=1.078\textwidth,
ytick style={color=black},
xlabel= {\(\displaystyle c_{\mathrm{x}}\)},
ylabel= {\(\displaystyle c_{\mathrm{y}}\)},
x label style={at={(axis description cs:0.5,-0.23)},anchor=north},
y label style={at={(axis description cs:-0.17,.5)}, anchor=south},
]
\path [draw=mediumcyan]
(axis cs:-2.18205080756888,-0.913397459621557)
--(axis cs:-0.55,1.91339745962156)
.. controls (axis cs:-0.516666666666666,1.97113248654052) and (axis cs:-0.483333333333333,1.97113248654052) .. (axis cs:-0.45,1.91339745962156)
--(axis cs:1.18205080756888,-0.913397459621556)
.. controls (axis cs:1.21538414090221,-0.971132486540518) and (axis cs:1.19871747423554,-0.999999999999999) .. (axis cs:1.13205080756888,-1)
--(axis cs:-2.13205080756888,-1)
.. controls (axis cs:-2.19871747423554,-1) and (axis cs:-2.21538414090221,-0.971132486540519) .. (axis cs:-2.18205080756888,-0.913397459621557);
\path [draw=mediumorange]
(axis cs:2.35244119983534,4.51618795026618)
--(axis cs:3.80201010126777,5.9657568516986)
.. controls (axis cs:3.84915055334687,6.0128973037777) and (axis cs:3.89629100542597,6.0128973037777) .. (axis cs:3.94343145750508,5.9657568516986)
--(axis cs:5.49906637611548,4.4101219330882)
.. controls (axis cs:5.54620682819458,4.36298148100909) and (axis cs:5.56506300902623,4.30641293851417) .. (axis cs:5.55563491861041,4.24041630560343)
--(axis cs:5.3717871555019,2.95348196384391)
.. controls (axis cs:5.36235906508608,2.88748533093317) and (axis cs:5.32507247441408,2.84740602633342) .. (axis cs:5.25992738348589,2.83324405004469)
--(axis cs:3.01584426117686,2.34539989302099)
.. controls (axis cs:2.95069917024868,2.33123791673225) and (axis cs:2.90854836193237,2.35608447142859) .. (axis cs:2.88939183622795,2.41993955711)
--(axis cs:2.31046531027332,4.34969464362541)
.. controls (axis cs:2.2913087845689,4.41354972930682) and (axis cs:2.30530074775624,4.46904749818708) .. (axis cs:2.35244119983534,4.51618795026618);
\path [draw=black, fill=mediumcyan, opacity=0.2]
(axis cs:-6.40523417456881,-1.90961901034025)
--(axis cs:-3.05002604329513,3.90343178846984)
.. controls (axis cs:-3.01669984917006,3.96117093662412) and (axis cs:-2.98336651609155,3.96117505830081) .. (axis cs:-2.9500260440596,3.9034441534999)
--(axis cs:0.406619555562519,-1.90877672257226)
.. controls (axis cs:0.439960027594464,-1.96650762737317) and (axis cs:0.423296930531927,-1.99537720145031) .. (axis cs:0.356630264374907,-1.99538544480368)
--(axis cs:-6.35522346652089,-1.99621536754161)
.. controls (axis cs:-6.42189013267791,-1.99622361089499) and (axis cs:-6.43856036869388,-1.96735815849453) .. (axis cs:-6.40523417456881,-1.90961901034025);
\path [draw=black, fill=mediumorange, opacity=0.2]
(axis cs:6.05277889094544,1.69575228878759)
--(axis cs:6.05277889094137,-2.59362843318781)
.. controls (axis cs:6.05277889094131,-2.66029509985447) and (axis cs:6.01944555760795,-2.69362843318778) .. (axis cs:5.95277889094128,-2.69362843318771)
--(axis cs:1.36410612083419,-2.69362843318336)
.. controls (axis cs:1.29743945416752,-2.6936284331833) and (axis cs:1.24410612083421,-2.66696176651658) .. (axis cs:1.20410612083426,-2.61362843318321)
--(axis cs:-0.471646167953774,-0.379292048128074)
.. controls (axis cs:-0.511646167953724,-0.325958714794703) and (axis cs:-0.513620914909888,-0.271252765615483) .. (axis cs:-0.477570408822269,-0.215174200590413)
--(axis cs:2.10790650610408,3.80667877817553)
.. controls (axis cs:2.1439570121917,3.8627573432006) and (axis cs:2.19133130189957,3.8749932982786) .. (axis cs:2.25002937522769,3.84338664340955)
--(axis cs:5.96473178095337,1.84316227109117)
.. controls (axis cs:6.02342985428148,1.81155561622212) and (axis cs:6.05277889094551,1.76241895545426) .. (axis cs:6.05277889094544,1.69575228878759);
\path [draw=black, fill=mediumcyan]
(axis cs:-4.68148117092414,-0.914125539274943)
--(axis cs:-3.04977990987337,1.91287116193117)
.. controls (axis cs:-3.01645371574829,1.97061031008545) and (axis cs:-2.98312038266978,1.97061443176214) .. (axis cs:-2.94977991063784,1.91288352696123)
--(axis cs:-1.31737958150392,-0.913709567098894)
.. controls (axis cs:-1.28403910947198,-0.971440471899802) and (axis cs:-1.30070220653452,-1.00031004597694) .. (axis cs:-1.36736887269154,-1.00031828933032)
--(axis cs:-4.63147046287622,-1.00072189647631)
.. controls (axis cs:-4.69813712903324,-1.00073013982968) and (axis cs:-4.71480736504921,-0.971864687429226) .. (axis cs:-4.68148117092414,-0.914125539274943);
\path [draw=black, fill=mediumorange]
(axis cs:4.53000000000132,0.799999999998875)
--(axis cs:4.52999999999938,-1.25000000000113)
.. controls (axis cs:4.52999999999932,-1.31666666666779) and (axis cs:4.49666666666595,-1.35000000000109) .. (axis cs:4.42999999999928,-1.35000000000103)
--(axis cs:2.22999999999928,-1.34999999999894)
.. controls (axis cs:2.16333333333262,-1.34999999999888) and (axis cs:2.10999999999931,-1.32333333333216) .. (axis cs:2.06999999999936,-1.26999999999879)
--(axis cs:1.29000000000035,-0.229999999998052)
.. controls (axis cs:1.2500000000004,-0.176666666664681) and (axis cs:1.24802525304423,-0.12196071748546) .. (axis cs:1.28407575913185,-0.065882152460391)
--(axis cs:2.52592424087098,1.86588215246312)
.. controls (axis cs:2.5619747469586,1.92196071748819) and (axis cs:2.60934903666647,1.9341966725662) .. (axis cs:2.66804710999459,1.90259001769714)
--(axis cs:4.44195289000925,0.94740998230246)
.. controls (axis cs:4.50065096333736,0.915803327433403) and (axis cs:4.53000000000139,0.866666666665541) .. (axis cs:4.53000000000132,0.799999999998875);

\addplot [semithick, mediumblue, opacity=0.2, mark=*, mark size=2.2, mark options={solid}]
table {%
-0.496537447572819 -0.344909520087439
};
\addplot [semithick, mediumorange, opacity=0.2, mark=*, mark size=1.3, mark options={solid}]
table {%
-0.496537447572819 -0.344909520087439
};
\draw (axis cs:-4,8) node[
  scale=0.454545454545455,
  anchor=base west,
  text=black,
  rotate=0.0
]{ CntI: 0 FE: 0 ns: 0};
\end{axis}

\end{tikzpicture}
		\vspace{-1.52em}
		\caption{$t=0$}
		\label{cntvis:a}
	\end{subfigure}
	\hspace{0.606em}
	\hfill
	\begin{subfigure}[t]{0.18\textwidth}
		\centering
\begin{tikzpicture}
\tikzstyle{every node}=[font=\scriptsize]

\definecolor{darkgray176}{RGB}{176,176,176}

\begin{axis}[
tick align=outside,
tick pos=left,
x grid style={darkgray176},
xmin=-6, xmax=8,
xtick style={color=black},
y grid style={darkgray176},
ymin=-4, ymax=7,
width=1.221\textwidth,
height=1.078\textwidth,
ytick style={color=black},
yticklabels={,,},
xlabel= {\(\displaystyle c_{\mathrm{x}}\)},
x label style={at={(axis description cs:0.5,-0.23)},anchor=north},
]

\path [draw=mediumcyan]
(axis cs:-2.18205080756888,-0.913397459621557)
--(axis cs:-0.55,1.91339745962156)
.. controls (axis cs:-0.516666666666666,1.97113248654052) and (axis cs:-0.483333333333333,1.97113248654052) .. (axis cs:-0.45,1.91339745962156)
--(axis cs:1.18205080756888,-0.913397459621556)
.. controls (axis cs:1.21538414090221,-0.971132486540518) and (axis cs:1.19871747423554,-0.999999999999999) .. (axis cs:1.13205080756888,-1)
--(axis cs:-2.13205080756888,-1)
.. controls (axis cs:-2.19871747423554,-1) and (axis cs:-2.21538414090221,-0.971132486540519) .. (axis cs:-2.18205080756888,-0.913397459621557);
\path [draw=mediumorange]
(axis cs:2.35244119983534,4.51618795026618)
--(axis cs:3.80201010126777,5.9657568516986)
.. controls (axis cs:3.84915055334687,6.0128973037777) and (axis cs:3.89629100542597,6.0128973037777) .. (axis cs:3.94343145750508,5.9657568516986)
--(axis cs:5.49906637611548,4.4101219330882)
.. controls (axis cs:5.54620682819458,4.36298148100909) and (axis cs:5.56506300902623,4.30641293851417) .. (axis cs:5.55563491861041,4.24041630560343)
--(axis cs:5.3717871555019,2.95348196384391)
.. controls (axis cs:5.36235906508608,2.88748533093317) and (axis cs:5.32507247441408,2.84740602633342) .. (axis cs:5.25992738348589,2.83324405004469)
--(axis cs:3.01584426117686,2.34539989302099)
.. controls (axis cs:2.95069917024868,2.33123791673225) and (axis cs:2.90854836193237,2.35608447142859) .. (axis cs:2.88939183622795,2.41993955711)
--(axis cs:2.31046531027332,4.34969464362541)
.. controls (axis cs:2.2913087845689,4.41354972930682) and (axis cs:2.30530074775624,4.46904749818708) .. (axis cs:2.35244119983534,4.51618795026618);
\path [draw=black, fill=mediumcyan, opacity=0.2]
(axis cs:0.0415991218668247,-3.42112793243751)
--(axis cs:1.32380455360843,-0.419410315705998)
.. controls (axis cs:1.34999257467235,-0.358102632686654) and (axis cs:1.38308658540929,-0.354115457689915) .. (axis cs:1.42308658581924,-0.407448790715781)
--(axis cs:3.38154758102521,-3.01873007584016)
.. controls (axis cs:3.42154758143517,-3.07206340886603) and (axis cs:3.40845357090321,-3.1027172503757) .. (axis cs:3.34226554942933,-3.11069160036918)
--(axis cs:0.10159912248176,-3.50112793197631)
.. controls (axis cs:0.0354111010078864,-3.50910228196979) and (axis cs:0.0154111008029079,-3.48243561545685) .. (axis cs:0.0415991218668247,-3.42112793243751);
\path [draw=black, fill=mediumorange, opacity=0.2]
(axis cs:4.53000001151145,0.799999998064151)
--(axis cs:4.53000000212326,-1.25000000848646)
.. controls (axis cs:4.53000000181795,-1.31666667515313) and (axis cs:4.49666666833197,-1.35000000833381) .. (axis cs:4.4300000016653,-1.3500000080285)
--(axis cs:2.22999999467798,-1.34999999795337)
.. controls (axis cs:2.16333332801131,-1.34999999764807) and (axis cs:2.1099999948001,-1.32333333073716) .. (axis cs:2.06999999504434,-1.26999999722064)
--(axis cs:1.28999999718689,-0.229999990154885)
.. controls (axis cs:1.24999999743113,-0.176666656638367) and (axis cs:1.24802525072545,-0.121960707450105) .. (axis cs:1.28407575706983,-0.0658821425900986)
--(axis cs:2.52592425158422,1.86588216276132)
.. controls (axis cs:2.5619747579286,1.92196072762133) and (axis cs:2.60934904769249,1.93419668248243) .. (axis cs:2.66804712087589,1.90259002734461)
--(axis cs:4.44195290219431,0.947409980770874)
.. controls (axis cs:4.50065097537771,0.915803325633059) and (axis cs:4.53000001181676,0.866666664730818) .. (axis cs:4.53000001151145,0.799999998064151);
\path [draw=black, fill=mediumcyan]
(axis cs:0.0415991265250401,-3.42112792894385)
--(axis cs:1.32380455430493,-0.419410321486961)
.. controls (axis cs:1.34999257536884,-0.358102638467618) and (axis cs:1.38308658610578,-0.354115463470879) .. (axis cs:1.42308658651574,-0.407448796496745)
--(axis cs:3.3815475756705,-3.01873007355286)
.. controls (axis cs:3.42154757608046,-3.07206340657873) and (axis cs:3.4084535655485,-3.1027172480884) .. (axis cs:3.34226554407463,-3.11069159808188)
--(axis cs:0.101599127139975,-3.50112792848265)
.. controls (axis cs:0.0354111056661017,-3.50910227847612) and (axis cs:0.0154111054611233,-3.48243561196319) .. (axis cs:0.0415991265250401,-3.42112792894385);
\path [draw=black, fill=mediumorange]
(axis cs:4.53000000705703,0.799999995443905)
--(axis cs:4.52999999766884,-1.2500000045561)
.. controls (axis cs:4.52999999736353,-1.31666667122276) and (axis cs:4.49666666387755,-1.35000000440344) .. (axis cs:4.42999999721088,-1.35000000409813)
--(axis cs:2.22999999721088,-1.349999994023)
.. controls (axis cs:2.16333333054421,-1.3499999937177) and (axis cs:2.109999997333,-1.32333332680679) .. (axis cs:2.06999999757725,-1.26999999329027)
--(axis cs:1.29000000234004,-0.229999989718177)
.. controls (axis cs:1.25000000258428,-0.176666656201659) and (axis cs:1.2480252558786,-0.121960707013397) .. (axis cs:1.28407576222298,-0.0658821421533909)
--(axis cs:2.525924252807,1.86588215708412)
.. controls (axis cs:2.56197475915138,1.92196072194413) and (axis cs:2.60934904891527,1.93419667680523) .. (axis cs:2.66804712209867,1.90259002166741)
--(axis cs:4.44195289773989,0.947409978150628)
.. controls (axis cs:4.50065097092329,0.915803323012813) and (axis cs:4.53000000736234,0.866666662110572) .. (axis cs:4.53000000705703,0.799999995443905);

\addplot [semithick, mediumred, mark=diamond*, mark size=4.5, mark options={solid}]
table {%
2.09875836808475 -1.30834449142492
};
\addplot [semithick, mediumblue, mark=*, mark size=2.2, mark options={solid}]
table {%
2.09269047196773 -1.30025396246854
};
\addplot [semithick, mediumorange, mark=*, mark size=1.3, mark options={solid}]
table {%
2.09269047196773 -1.30025396246854
};

\draw (axis cs:-4,8) node[
  scale=0.454545454545455,
  anchor=base west,
  text=black,
  rotate=0.0
]{ CntI: 4 FE: 1 ns: 3};
\end{axis}

\end{tikzpicture}
		\vspace{-0.3em}
		\caption{$t=5.00$}
		\label{cntvis:b}
	\end{subfigure}
	\hfill
	\begin{subfigure}[t]{0.18\textwidth}
		\centering
\begin{tikzpicture}
\tikzstyle{every node}=[font=\scriptsize]

\definecolor{darkgray176}{RGB}{176,176,176}

\begin{axis}[
tick align=outside,
tick pos=left,
x grid style={darkgray176},
xmin=-6, xmax=8,
xtick style={color=black},
y grid style={darkgray176},
ymin=-4, ymax=7,
width=1.221\textwidth,
height=1.078\textwidth,
ytick style={color=black},
yticklabels={,,},
xlabel= {\(\displaystyle c_{\mathrm{x}}\)},
x label style={at={(axis description cs:0.5,-0.23)},anchor=north},
y label style={at={(axis description cs:-0.12,.5)}, anchor=south},
]
\path [draw=mediumcyan]
(axis cs:-2.18205080756888,-0.913397459621557)
--(axis cs:-0.55,1.91339745962156)
.. controls (axis cs:-0.516666666666666,1.97113248654052) and (axis cs:-0.483333333333333,1.97113248654052) .. (axis cs:-0.45,1.91339745962156)
--(axis cs:1.18205080756888,-0.913397459621556)
.. controls (axis cs:1.21538414090221,-0.971132486540518) and (axis cs:1.19871747423554,-0.999999999999999) .. (axis cs:1.13205080756888,-1)
--(axis cs:-2.13205080756888,-1)
.. controls (axis cs:-2.19871747423554,-1) and (axis cs:-2.21538414090221,-0.971132486540519) .. (axis cs:-2.18205080756888,-0.913397459621557);
\path [draw=mediumorange]
(axis cs:2.35244119983534,4.51618795026618)
--(axis cs:3.80201010126777,5.9657568516986)
.. controls (axis cs:3.84915055334687,6.0128973037777) and (axis cs:3.89629100542597,6.0128973037777) .. (axis cs:3.94343145750508,5.9657568516986)
--(axis cs:5.49906637611548,4.4101219330882)
.. controls (axis cs:5.54620682819458,4.36298148100909) and (axis cs:5.56506300902623,4.30641293851417) .. (axis cs:5.55563491861041,4.24041630560343)
--(axis cs:5.3717871555019,2.95348196384391)
.. controls (axis cs:5.36235906508608,2.88748533093317) and (axis cs:5.32507247441408,2.84740602633342) .. (axis cs:5.25992738348589,2.83324405004469)
--(axis cs:3.01584426117686,2.34539989302099)
.. controls (axis cs:2.95069917024868,2.33123791673225) and (axis cs:2.90854836193237,2.35608447142859) .. (axis cs:2.88939183622795,2.41993955711)
--(axis cs:2.31046531027332,4.34969464362541)
.. controls (axis cs:2.2913087845689,4.41354972930682) and (axis cs:2.30530074775624,4.46904749818708) .. (axis cs:2.35244119983534,4.51618795026618);
\path [draw=black, fill=mediumcyan, opacity=0.2]
(axis cs:5.02957654413122,-1.64911400039178)
--(axis cs:3.21132893892235,1.06166727371884)
.. controls (axis cs:3.17419268501651,1.11703281435817) and (axis cs:3.18888260388401,1.14695466915992) .. (axis cs:3.25539869552485,1.1514328381241)
--(axis cs:6.51212794561223,1.3706908175499)
.. controls (axis cs:6.57864403725307,1.37516898651407) and (axis cs:6.59721216420599,1.34748621619441) .. (axis cs:6.56783232647099,1.2876425065909)
--(axis cs:5.12935068159248,-1.64239674694551)
.. controls (axis cs:5.09997084385748,-1.70224045654902) and (axis cs:5.06671279803705,-1.70447954103111) .. (axis cs:5.02957654413122,-1.64911400039178);
\path [draw=black, fill=mediumorange, opacity=0.2]
(axis cs:2.977137374282,4.32610267620832)
--(axis cs:5.00334234491575,4.63754041362468)
.. controls (axis cs:5.06923518911833,4.64766847006902) and (axis cs:5.10724563944178,4.6197860761899) .. (axis cs:5.11737369588612,4.55389323198733)
--(axis cs:5.45159956041911,2.37942936113748)
.. controls (axis cs:5.46172761686345,2.31353651693491) and (axis cs:5.44347292433789,2.25677101899511) .. (axis cs:5.39683548284243,2.2091328673181)
--(axis cs:4.48740536829977,1.28018890411955)
.. controls (axis cs:4.44076792680431,1.23255075244254) and (axis cs:4.38699697324055,1.22228795292273) .. (axis cs:4.32609250760847,1.24940050556013)
--(axis cs:2.22808815742969,2.18335916996005)
.. controls (axis cs:2.16718369179762,2.21047172259746) and (axis cs:2.14789262137063,2.25543722630615) .. (axis cs:2.17021494614872,2.31825568108614)
--(axis cs:2.844814620811,4.21668290937182)
.. controls (axis cs:2.86713694558909,4.27950136415182) and (axis cs:2.91124453007943,4.31597461976398) .. (axis cs:2.977137374282,4.32610267620832);
\path [draw=black, fill=mediumcyan]
(axis cs:5.02957654344227,-1.64911399015848)
--(axis cs:3.21132894812912,1.06166726919884)
.. controls (axis cs:3.17419269422328,1.11703280983817) and (axis cs:3.18888261309078,1.14695466463993) .. (axis cs:3.25539870473162,1.1514328336041)
--(axis cs:6.51212793709441,1.3706908118366)
.. controls (axis cs:6.57864402873525,1.37516898080077) and (axis cs:6.59721215568817,1.34748621048111) .. (axis cs:6.56783231795316,1.2876425008776)
--(axis cs:5.12935068090353,-1.64239673671222)
.. controls (axis cs:5.09997084316852,-1.70224044631573) and (axis cs:5.0667127973481,-1.70447953079781) .. (axis cs:5.02957654344227,-1.64911399015848);
\path [draw=black, fill=mediumorange]
(axis cs:2.97713738003584,4.32610266915437)
--(axis cs:5.003342339265,4.63754040481779)
.. controls (axis cs:5.06923518346757,4.64766846126213) and (axis cs:5.10724563379103,4.61978606738302) .. (axis cs:5.11737369023537,4.55389322318044)
--(axis cs:5.45159955289855,2.37942936449549)
.. controls (axis cs:5.46172760934289,2.31353652029292) and (axis cs:5.44347291681733,2.25677102235312) .. (axis cs:5.39683547532188,2.20913287067611)
--(axis cs:4.48740536616048,1.2801889129743)
.. controls (axis cs:4.44076792466503,1.23255076129728) and (axis cs:4.38699697110126,1.22228796177748) .. (axis cs:4.32609250546919,1.24940051441488)
--(axis cs:2.22808816698646,2.18335917360813)
.. controls (axis cs:2.16718370135438,2.21047172624553) and (axis cs:2.14789263092739,2.25543722995423) .. (axis cs:2.17021495570549,2.31825568473422)
--(axis cs:2.84481462656484,4.21668290231788)
.. controls (axis cs:2.86713695134293,4.27950135709787) and (axis cs:2.91124453583326,4.31597461271004) .. (axis cs:2.97713738003584,4.32610266915437);

\addplot [semithick, mediumred, mark=diamond*, mark size=4.5, mark options={solid}]
table {%
4.40133466981716 1.22858251123038
};
\addplot [semithick, mediumblue, mark=*, mark size=2.2, mark options={solid}]
table {%
4.4013346697714 1.22858251140455
};
\addplot [semithick, mediumorange, mark=*, mark size=1.3, mark options={solid}]
table {%
4.4013346697714 1.22858251140455
};

\draw (axis cs:-4,8) node[
  scale=0.454545454545455,
  anchor=base west,
  text=black,
  rotate=0.0
]{ CntI: 12 FE: 0 ns: 3};
\end{axis}

\end{tikzpicture}
		\vspace{-0.3em}
		\caption{$t=12.79$}
		\label{cntvis:c}
	\end{subfigure}
	\hfill
	\begin{subfigure}[t]{0.18\textwidth}
		\centering
\begin{tikzpicture}
\tikzstyle{every node}=[font=\scriptsize]
\definecolor{darkgray176}{RGB}{176,176,176}

\begin{axis}[
tick align=outside,
tick pos=left,
x grid style={darkgray176},
xmin=-6, xmax=8,
xtick style={color=black},
y grid style={darkgray176},
ymin=-4, ymax=7,
width=1.221\textwidth,
height=1.078\textwidth,
ytick style={color=black},
yticklabels={,,},
xlabel= {\(\displaystyle c_{\mathrm{x}}\)},
x label style={at={(axis description cs:0.5,-0.23)},anchor=north},
]
\path [draw=mediumcyan]
(axis cs:-2.18205080756888,-0.913397459621557)
--(axis cs:-0.55,1.91339745962156)
.. controls (axis cs:-0.516666666666666,1.97113248654052) and (axis cs:-0.483333333333333,1.97113248654052) .. (axis cs:-0.45,1.91339745962156)
--(axis cs:1.18205080756888,-0.913397459621556)
.. controls (axis cs:1.21538414090221,-0.971132486540518) and (axis cs:1.19871747423554,-0.999999999999999) .. (axis cs:1.13205080756888,-1)
--(axis cs:-2.13205080756888,-1)
.. controls (axis cs:-2.19871747423554,-1) and (axis cs:-2.21538414090221,-0.971132486540519) .. (axis cs:-2.18205080756888,-0.913397459621557);
\path [draw=mediumorange]
(axis cs:2.35244119983534,4.51618795026618)
--(axis cs:3.80201010126777,5.9657568516986)
.. controls (axis cs:3.84915055334687,6.0128973037777) and (axis cs:3.89629100542597,6.0128973037777) .. (axis cs:3.94343145750508,5.9657568516986)
--(axis cs:5.49906637611548,4.4101219330882)
.. controls (axis cs:5.54620682819458,4.36298148100909) and (axis cs:5.56506300902623,4.30641293851417) .. (axis cs:5.55563491861041,4.24041630560343)
--(axis cs:5.3717871555019,2.95348196384391)
.. controls (axis cs:5.36235906508608,2.88748533093317) and (axis cs:5.32507247441408,2.84740602633342) .. (axis cs:5.25992738348589,2.83324405004469)
--(axis cs:3.01584426117686,2.34539989302099)
.. controls (axis cs:2.95069917024868,2.33123791673225) and (axis cs:2.90854836193237,2.35608447142859) .. (axis cs:2.88939183622795,2.41993955711)
--(axis cs:2.31046531027332,4.34969464362541)
.. controls (axis cs:2.2913087845689,4.41354972930682) and (axis cs:2.30530074775624,4.46904749818708) .. (axis cs:2.35244119983534,4.51618795026618);
\path [draw=black, fill=mediumcyan, opacity=0.2]
(axis cs:3.6831655620857,-0.93805961648615)
--(axis cs:1.73744078994373,1.68272548272063)
.. controls (axis cs:1.69770091671967,1.73625292353916) and (axis cs:1.71094401018786,1.76684265362139) .. (axis cs:1.7771700703483,1.77449467296733)
--(axis cs:5.01969893019208,2.14914920481158)
.. controls (axis cs:5.08592499035252,2.15680122415752) and (axis cs:5.10579492696455,2.13003750374826) .. (axis cs:5.07930874002817,2.06885804358379)
--(axis cs:3.78250465232636,-0.926581587467238)
.. controls (axis cs:3.75601846538998,-0.987761047631703) and (axis cs:3.72290543530976,-0.991587057304674) .. (axis cs:3.6831655620857,-0.93805961648615);
\path [draw=black, fill=mediumorange, opacity=0.2]
(axis cs:2.56105832471874,4.55476921223305)
--(axis cs:4.35122313752646,5.55367359806389)
.. controls (axis cs:4.40943987920519,5.5861582934258) and (axis cs:4.45479059772551,5.57329227026738) .. (axis cs:4.48727529308742,5.51507552858865)
--(axis cs:5.5592702437127,3.59392304659134)
.. controls (axis cs:5.5917549390746,3.5357063049126) and (axis cs:5.59445599869264,3.47613903342485) .. (axis cs:5.5673734225668,3.41522123212809)
--(axis cs:5.03926318619419,2.22732410252532)
.. controls (axis cs:5.01218061006835,2.16640630122855) and (axis cs:4.96537081419284,2.13802525982352) .. (axis cs:4.89883379856768,2.14218097831023)
--(axis cs:2.60680239363891,2.28533493843433)
.. controls (axis cs:2.54026537801375,2.28949065692104) and (axis cs:2.50649622593698,2.32489808961549) .. (axis cs:2.5054949374086,2.39155723651767)
--(axis cs:2.4752351449932,4.40605344883691)
.. controls (axis cs:2.47423385646482,4.47271259573909) and (axis cs:2.50284158304,4.52228451687114) .. (axis cs:2.56105832471874,4.55476921223305);
\path [draw=black, fill=mediumcyan]
(axis cs:3.68316556136288,-0.938059610230323)
--(axis cs:1.73744079572284,1.6827254802187)
.. controls (axis cs:1.69770092249879,1.73625292103723) and (axis cs:1.71094401596698,1.76684265111946) .. (axis cs:1.77717007612742,1.7744946704654)
--(axis cs:5.01969892513579,2.14914920105768)
.. controls (axis cs:5.08592498529623,2.15680122040363) and (axis cs:5.10579492190826,2.13003749999436) .. (axis cs:5.07930873497187,2.0688580398299)
--(axis cs:3.78250465160354,-0.926581581211411)
.. controls (axis cs:3.75601846466715,-0.987761041375877) and (axis cs:3.72290543458694,-0.991587051048847) .. (axis cs:3.68316556136288,-0.938059610230323);
\path [draw=black, fill=mediumorange]
(axis cs:2.56105832954085,4.55476920940697)
--(axis cs:4.35122313616191,5.55367359178568)
.. controls (axis cs:4.40943987784064,5.58615828714759) and (axis cs:4.45479059636096,5.57329226398918) .. (axis cs:4.48727529172287,5.51507552231044)
--(axis cs:5.55927023866588,3.59392304691223)
.. controls (axis cs:5.59175493402778,3.5357063052335) and (axis cs:5.59445599364582,3.47613903374575) .. (axis cs:5.56737341751998,3.41522123244898)
--(axis cs:5.03926318306607,2.22732410716202)
.. controls (axis cs:5.01218060694023,2.16640630586525) and (axis cs:4.96537081106472,2.13802526446022) .. (axis cs:4.89883379543956,2.14218098294693)
--(axis cs:2.60680239835629,2.28533494258102)
.. controls (axis cs:2.54026538273112,2.28949066106773) and (axis cs:2.50649623065435,2.32489809376218) .. (axis cs:2.50549494212598,2.39155724066436)
--(axis cs:2.47523514981531,4.40605344601083)
.. controls (axis cs:2.47423386128694,4.47271259291302) and (axis cs:2.50284158786212,4.52228451404506) .. (axis cs:2.56105832954085,4.55476920940697);

\addplot [semithick, mediumblue, mark=*, mark size=2.2, mark options={solid}]
table {%
4.94696758090566 2.14074553576263
};
\addplot [semithick, mediumorange, mark=*, mark size=1.3, mark options={solid}]
table {%
4.94696758090566 2.14074553576263
};

\draw (axis cs:-4,8) node[
  scale=0.454545454545455,
  anchor=base west,
  text=black,
  rotate=0.0
]{ CntI: 14 FE: 0 ns: 3};
\end{axis}

\end{tikzpicture}
		\vspace{-0.3em}
		\caption{$t=14.78$}
		\label{cntvis:d}
	\end{subfigure}
	\hfill
	\begin{subfigure}[t]{0.18\textwidth}
		\centering
\begin{tikzpicture}
\tikzstyle{every node}=[font=\scriptsize]
\definecolor{darkgray176}{RGB}{176,176,176}

\begin{axis}[
tick align=outside,
tick pos=left,
x grid style={darkgray176},
xmin=-6, xmax=8,
xtick style={color=black},
y grid style={darkgray176},
ymin=-4, ymax=7,
width=1.221\textwidth,
height=1.078\textwidth,
ytick style={color=black},
yticklabels={,,},
xlabel= {\(\displaystyle c_{\mathrm{x}}\)},
x label style={at={(axis description cs:0.5,-0.23)},anchor=north},
]
\path [draw=mediumcyan]
(axis cs:-2.18205080756888,-0.913397459621557)
--(axis cs:-0.55,1.91339745962156)
.. controls (axis cs:-0.516666666666666,1.97113248654052) and (axis cs:-0.483333333333333,1.97113248654052) .. (axis cs:-0.45,1.91339745962156)
--(axis cs:1.18205080756888,-0.913397459621556)
.. controls (axis cs:1.21538414090221,-0.971132486540518) and (axis cs:1.19871747423554,-0.999999999999999) .. (axis cs:1.13205080756888,-1)
--(axis cs:-2.13205080756888,-1)
.. controls (axis cs:-2.19871747423554,-1) and (axis cs:-2.21538414090221,-0.971132486540519) .. (axis cs:-2.18205080756888,-0.913397459621557);
\path [draw=mediumorange]
(axis cs:2.35244119983534,4.51618795026618)
--(axis cs:3.80201010126777,5.9657568516986)
.. controls (axis cs:3.84915055334687,6.0128973037777) and (axis cs:3.89629100542597,6.0128973037777) .. (axis cs:3.94343145750508,5.9657568516986)
--(axis cs:5.49906637611548,4.4101219330882)
.. controls (axis cs:5.54620682819458,4.36298148100909) and (axis cs:5.56506300902623,4.30641293851417) .. (axis cs:5.55563491861041,4.24041630560343)
--(axis cs:5.3717871555019,2.95348196384391)
.. controls (axis cs:5.36235906508608,2.88748533093317) and (axis cs:5.32507247441408,2.84740602633342) .. (axis cs:5.25992738348589,2.83324405004469)
--(axis cs:3.01584426117686,2.34539989302099)
.. controls (axis cs:2.95069917024868,2.33123791673225) and (axis cs:2.90854836193237,2.35608447142859) .. (axis cs:2.88939183622795,2.41993955711)
--(axis cs:2.31046531027332,4.34969464362541)
.. controls (axis cs:2.2913087845689,4.41354972930682) and (axis cs:2.30530074775624,4.46904749818708) .. (axis cs:2.35244119983534,4.51618795026618);
\path [draw=black, fill=mediumcyan, opacity=0.2]
(axis cs:-4.14048227280132,-2.04618714946622)
--(axis cs:-0.552250645816139,4.17726183847685)
.. controls (axis cs:-0.518951271960502,4.23501645859739) and (axis cs:-0.485617944391442,4.23503606178297) .. (axis cs:-0.452250663108961,4.17732064803361)
--(axis cs:3.14329844611371,-2.04190358956987)
.. controls (axis cs:3.17666572739619,-2.09961900331923) and (axis cs:3.16001604046837,-2.12849631337949) .. (axis cs:3.09334938533025,-2.12853551975066)
--(axis cs:-4.0904313508776,-2.13276027009027)
.. controls (axis cs:-4.15709800601572,-2.13279947646144) and (axis cs:-4.17378164665696,-2.10394176958676) .. (axis cs:-4.14048227280132,-2.04618714946622);
\path [draw=black, fill=mediumorange, opacity=0.2]
(axis cs:0.420729267514523,5.03828304196311)
--(axis cs:3.68475479272701,8.27378906598271)
.. controls (axis cs:3.73210163809042,8.32072221715091) and (axis cs:3.77924163635623,8.3205153700533) .. (axis cs:3.82617478752443,8.27316852468988)
--(axis cs:7.28676784337898,4.78207192872389)
.. controls (axis cs:7.33370099454718,4.73472508336047) and (axis cs:7.35230877733637,4.67807434660246) .. (axis cs:7.34259119174656,4.61211971844985)
--(axis cs:6.90569757349756,1.64686092370701)
.. controls (axis cs:6.89597998790774,1.5809062955544) and (axis cs:6.85851789262526,1.54099098623346) .. (axis cs:6.7933112876501,1.52711499574419)
--(axis cs:1.78415282551183,0.461164158304382)
.. controls (axis cs:1.71894622053668,0.447288167815112) and (axis cs:1.67690484193719,0.472319436409623) .. (axis cs:1.65802868971337,0.536257964087916)
--(axis cs:0.378023227805122,4.87197552369337)
.. controls (axis cs:0.359147075581306,4.93591405137166) and (axis cs:0.373382422151106,4.99134989079491) .. (axis cs:0.420729267514523,5.03828304196311);
\path [draw=black, fill=mediumcyan]
(axis cs:-2.18130786855746,-0.91352051535474)
--(axis cs:-0.550919768778514,1.91423371690168)
.. controls (axis cs:-0.517620394922876,1.97198833702221) and (axis cs:-0.484287067353816,1.9720079402078) .. (axis cs:-0.450919786071335,1.91429252645843)
--(axis cs:1.18279316483222,-0.911542102106178)
.. controls (axis cs:1.2161604461147,-0.969257515855541) and (axis cs:1.19951075918688,-0.998134825915807) .. (axis cs:1.13284410404876,-0.998174032286975)
--(axis cs:-2.13125694663373,-1.00009363597879)
.. controls (axis cs:-2.19792360177185,-1.00013284234995) and (axis cs:-2.21460724241309,-0.971275135475272) .. (axis cs:-2.18130786855746,-0.91352051535474);
\path [draw=black, fill=mediumorange]
(axis cs:2.36274518503575,4.52569287160665)
--(axis cs:3.81866067996083,5.9688872700288)
.. controls (axis cs:3.86600752532425,6.015820421197) and (axis cs:3.91314752359006,6.01561357409939) .. (axis cs:3.96008067475826,5.96826672873598)
--(axis cs:5.50887466330885,4.40582083174321)
.. controls (axis cs:5.55580781447705,4.35847398637979) and (axis cs:5.57441559726625,4.30182324962178) .. (axis cs:5.56469801167643,4.23586862146917)
--(axis cs:5.37520509267507,2.94975337249327)
.. controls (axis cs:5.36548750708525,2.88379874434066) and (axis cs:5.32802541180277,2.84388343501972) .. (axis cs:5.26281880682761,2.83000744453045)
--(axis cs:3.01661668164939,2.35201477280916)
.. controls (axis cs:2.95141007667424,2.33813878231989) and (axis cs:2.90936869807475,2.3631700509144) .. (axis cs:2.89049254585093,2.42710857859269)
--(axis cs:2.32003914532635,4.35938535333691)
.. controls (axis cs:2.30116299310254,4.42332388101521) and (axis cs:2.31539833967234,4.47875972043845) .. (axis cs:2.36274518503575,4.52569287160665);

\addplot [semithick, mediumblue, opacity=0.2, mark=*, mark size=2.2, mark options={solid}]
table {%
1.66947414419505 0.507371503867502
};
\addplot [semithick, mediumorange, opacity=0.2, mark=*, mark size=1.3, mark options={solid}]
table {%
1.66947414419505 0.507371503867502
};

\draw (axis cs:-4,8) node[
  scale=0.454545454545455,
  anchor=base west,
  text=black,
  rotate=0.0
]{ CntI: 19 FE: 1 ns: 3};
\end{axis}

\end{tikzpicture}
		\vspace{-0.3em}
		\caption{$t=20.00$}
		\label{cntvis:e}
	\end{subfigure}
	
	\caption{System configurations for the optimal control example. For detailed velocity profiles, cf. Fig. \ref{cntplot}. Orange and blue dots represent the approximation for $p(t^{-})$ and $p(t^{+})$, respectively. Red markers represent the ECP used for impulse resolution.}
	\vspace{-1em}
	\label{cntvis}
\end{figure*}
\section{NUMERICAL EXAMPLES} \label{secnumex}
In the following, we consider two numerical examples, which illustrate the different dynamic switches permitted by the CLS (\ref{cls}). To this end, we consider the simulation of a falling cuboid in Section \ref{sec4:sim}. In Section \ref{sec4:cnt} the optimal control of a planar manipulation task is discussed.

Due to the complementarity constraints in the FESD-J discretization, the discrete time OCP (\ref{ocp}) is a mathematical problem with complementarity constraints (MPCC). Here we solve MPCC via a relaxation strategy \cite{scholtes} using \texttt{IPOPT} \cite{ipopt} as solver, which is called from the \texttt{CasADi} \cite{casadi} interface. 

\subsection{Simulation of a Falling Cuboid} \label{sec4:sim}
We consider a single cuboid with DoF making contact with another fixed cuboid. Thus, there are three DoF for the system configuration $q = (c_{\mathrm{x}},c_{\mathrm{y}},\xi) \in \mathbb{R}^{3}$. The cuboid with DoF is under the influence of a constant external force $f_{\mathrm{v}}(q,\nu,u) = (0,-1,0)^{\top}$ and has an initial horizontal velocity $\hat{\nu}_{0} = (1,0,0)$. The time horizon $[0,T]$, with $T = 3.68$, is divided into $N = 23$ simulation steps, for each $N_{\mathrm{FE}} = 2$ FE are considered. The Radau IIA method with $n_{\mathrm{s}} = 4$ stage points is used, which has seventh-order accuracy \cite{hairer}. 

Fig. \ref{simvis} shows the configuration of the falling cuboid at time instances where dynamic switches occur. Fig. \ref{simplot} visualizes the evolution of the state trajectories as well as force and impulse magnitudes and the length of the FE. We observe that the FE lengths are practically equidistant for the simulation steps where no switches happen, and, on the other hand, adjust to accurately detect switches if one occurs.

At the state in Fig. \ref{simvis:b}, the bodies make contact for the first time and an impact occurs. Consequently, system velocities undergo a jump. At the time instance in Fig. \ref{simvis:c} another impact occurs, as the cuboids form a patch contact. At the same time the active set of the SDF changes and the ECP parameterizing the contact force normal jumps, which illustrates that it is generally discontinuous. We observe in Fig. \ref{simvis:c} that the ECP for the impulse simulation is chosen within the contact surface such that patch contact is maintained. Due to the initial velocity and no involved friction the cuboid then slides horizontally until the active set of the SDF changes at the configuration in Fig. \ref{simvis:d}. Another active set change in the SDF occurs in Fig. \ref{simvis:e}, which simultaneously results in a jump of the \mbox{force magnitude}.

\begin{figure}
	\centering
	\vspace{1.8em}
	\input{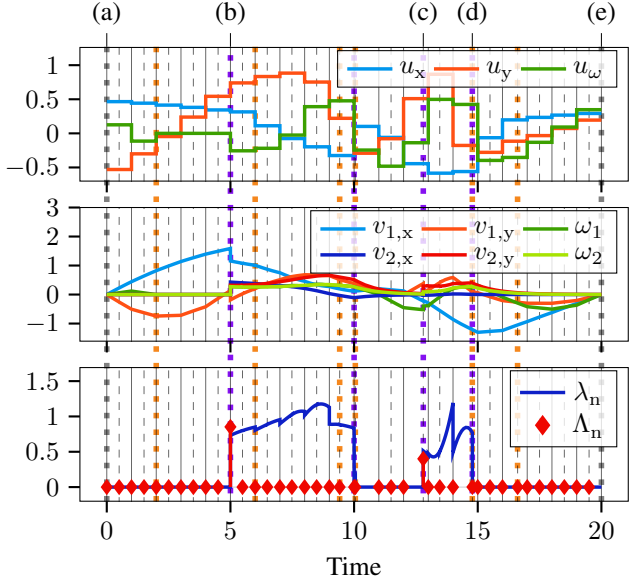}
	\caption{Evolution of differential and algebraic states for the optimal control example. Active set changes of the force complementarity (\ref{cls:c}) and the SDF complementarity (\ref{cls:e}) are marked by purple and orange dotted lines, respectively. }
	\vspace{-1em}
	\label{cntplot}
\end{figure}
\subsection{Optimal Control for a Manipulation Task} \label{sec4:cnt}
We now consider a manipulation task, where a triangular shaped object can be actuated and through contact force can influence another object, which is a nonregular pentagon. Both objects should end up in a goal position, which is transcribed in the time discrete OCP (\ref{ocp}) through the terminal cost function. The nominal dynamics of the CLS (\ref{cls}) are given by $f_{\mathrm{v}}(q,v,u) = (u,\bm{0}) + (-0.1\nu_{1},-\nu_{2})$, where $u \in \mathbb{R}^{3}$, and $\nu_{1} = (v_{1,\mathrm{x}},v_{1,\mathrm{y}},\omega_{1})\in \mathbb{R}^{3}$, $\nu_{2} = (v_{2,\mathrm{x}},v_{2,\mathrm{y}},\omega_{2})\in \mathbb{R}^{3}$, $\nu = (\nu_{1},\nu_{2})$, are the translational and angular velocities of the two bodies. Including forces proportional to the negative velocity models simplified friction effects. We consider the time horizon $[0,T]$, with $T = 20$, which is divided into $N=20$ equidistant control intervals. For each control interval $N_{\mathrm{FE}} = 2$ FE are used together with the seventh-order Radau IIA method, i.e., $n_{\mathrm{s}} = 4$.

Fig. \ref{cntvis} displays five system configurations for a solution trajectory of the discrete time OCP (\ref{ocp}), and Fig. \ref{cntplot} visualizes the corresponding control, velocity, force and impulse magnitudes. We again note that the FE lengths adapt to detect switches if any occur. Furthermore, we can observe that since control forces change discontinuously across control intervals, contact forces are also discontinuous. However, we observe that within an equidistant control interval, contact forces remain continuous if no switch occurs. This is the expected behavior, as if external control forces are smooth and no dynamic switch occurs, CLS (\ref{cls}) evolves smoothly.


\section{CONCLUSION AND OUTLOOK}
This paper presented a complementarity Lagrangian system with an embedded nondifferentiable signed distance function that allows one to model dynamic interactions of rigid bodies with patch contacts. We discussed that if a suitable impact law is chosen, the evolution of state trajectories for this system is unique. We then showed that the FESD-J method can be extended to the considered system such that the integration order of an underlying Runge-Kutta method is maintained. In further work, the methodology will be extended to incorporate frictional effects and will be discussed in a detailed manner in a journal publication.


\section{APPENDIX}
\setcounter{proposition}{0}

\subsection{Proof of Proposition \ref{uniquepdvar}} \label{app:proof1}
We investigate the KKT conditions of the distance problem (\ref{distprob}) to prove the proposition. To this end, let $(z, \mu)$ be optimal for (\ref{distprob}). We define
\begin{align*}
	&\mathcal{A}_{1,\mathrm{p}} = \{i \ \vert \ \mu_{1,\mathrm{p},i} > 0\}, \\
	&\mathcal{A}_{2,\mathrm{p}} = \{i \ \vert \ \mu_{2,\mathrm{p},i} > 0\},
\end{align*}
as the active sets for the constraints (\ref{distprob:b}) and (\ref{distprob:c}), respectively. We first observe that in two dimensions it holds
\begin{equation*}
1 \leq \vert \mathcal{A}_{1,\mathrm{p}} \vert \leq 2, \qquad 1 \leq \vert \mathcal{A}_{2,\mathrm{p}} \vert \leq 2.
\end{equation*}
The constraints (\ref{distprob:c}) and (\ref{distprob:d}) are necessarily active as well. If any of the aforementioned active constraints would be inactive, this would imply that $\alpha$ can be decreased such that there are primal variables $p$, $y_{1}$, $y_{2}$, fulfilling the constraints (\ref{distprob:a})-(\ref{distprob:d}), which would contradict optimality. In the following, we distinguish three cases.

Case 1: Let $\vert \mathcal{A}_{1,\mathrm{p}} \vert = 2$ and $\vert \mathcal{A}_{2,\mathrm{p}} \vert = 2$. Then we have $\mathcal{A}_{1,\mathrm{p}} = \{k_{1,1},k_{1,2}\}$, $\mathcal{A}_{2,\mathrm{p}} = \{k_{2,1},k_{2,2}\}$, $k_{1,1},k_{1,2}\in \{1,\dots,\nha\}$, $k_{2,1},k_{2,2}\in \{1,\dots,\nhb\}$. We select active components of the halfspace representation and dual variables as
\begin{alignat*}{2}
&\bar{A}_{1} = A_{1,\mathcal{A}_{1,\mathrm{p}}}, &&\bar{A}_{2} = A_{2,\mathcal{A}_{2,\mathrm{p}}}, \\
&\bar{b}_{1} = b_{1,\mathcal{A}_{1,\mathrm{p}}},  &&\bar{b}_{2} = b_{2,\mathcal{A}_{2,\mathrm{p}}}, \\
&\bar{\mu}_{1,\mathrm{p}} = \mu_{1, \mathrm{p},\mathcal{A}_{1,\mathrm{p}}},  \quad &&\bar{\mu}_{2,\mathrm{p}} = \mu_{2, \mathrm{p},\mathcal{A}_{2,\mathrm{p}}},
\end{alignat*}
where $\bar{A}_{1} \in \mathbb{R}^{2\times2}$ and $\bar{A}_{2} \in \mathbb{R}^{2\times2}$ contain the normal vectors corresponding to active halfspaces as rows.

Then the equality conditions imposed by the active inequality constraints of (\ref{distprob}) together with the equality conditions imposed by the stationary KKT condition (\ref{statKKTcond}) are given by
\begin{subequations} \label{KKTsystem}
	\begin{align}
	\bar{A}_{1}R( \xi_{1})^{\top}(y_{1}-c_{1})&= \alpha \bar{b}_{1} - \bm{1}r, \label{KKTsystem:a}\\
		\bar{A}_{2}R( \xi_{2})^{\top}(y_{2}-c_{2})&= \alpha \bar{b}_{2} - \bm{1}r, \label{KKTsystem:b}\\
		\Vert p - y_{1} \Vert_{2}^{2} &= r^2, \label{KKTsystem:c}\\
		\Vert p - y_{2} \Vert_{2}^{2} &= r^2, \label{KKTsystem:d}\\
		1 -\bar{b}_{1}^{\top} \bar{\mu}_{1,\mathrm{p}} -\bar{b}_{2}^{\top} \bar{\mu}_{2,\mathrm{p}} &= 0, \label{KKTsystem:e}\\
		2(p-y_{1})\mu_{1, \mathrm{c}} &= - 2(p-y_{2})\mu_{2, \mathrm{c}},  \label{KKTsystem:f}\\
		R( \xi_{1})\bar{A}_{1}^{\top}\bar{\mu}_{1,\mathrm{p}} &=  2(p-y_{1}) \mu_{1, \mathrm{c}}, \label{KKTsystem:g}\\
		R( \xi_{2})\bar{A}_{2}^{\top}\bar{\mu}_{2,\mathrm{p}} &=  2(p-y_{2}) \mu_{2, \mathrm{c}}.\label{KKTsystem:h}
	\end{align}
\end{subequations}
Resolving (\ref{KKTsystem:a}) and (\ref{KKTsystem:b}) for $y_1$ and $y_2$, respectively, results in
\begin{subequations}\label{yresolve}
\begin{align} 
y_{1} &= c_{1} + R(\xi_{1})\bar{A}_{1}^{-1}(\alpha \bar{b}_{1} - \bm{1}r), \\
y_{2} &= c_{2} + R(\xi_{2})\bar{A}_{2}^{-1}(\alpha \bar{b}_{2} - \bm{1}r).
\end{align}
\end{subequations}
Due to the assumptions in Section \ref{section1a} that the considered polytopes are not degenerate, the matrices $\bar{A}_{1}$ and $\bar{A}_{2}$ are invertible. Condition (\ref{KKTsystem:f}) implies that $p$ is element of the straight line connecting $y_{1}$ and $y_{2}$. Conditions (\ref{KKTsystem:c}) and (\ref{KKTsystem:d}) imply that $p$ has a distance of $r$ to $y_{1}$ and $y_{2}$. Thus, we obtain
\begin{equation}  \label{ydisteq}
\Vert y_{1}-y_{2} \Vert_{2} = 2r,
\end{equation}
and
\begin{equation} \label{pexpl}
p = y_{1} + \frac{y_{2} - y_{1}}{2}.
\end{equation}
Substituting (\ref{yresolve}) in (\ref{ydisteq}) results in the condition
\begin{equation} \label{quadalphaeq}
	\begin{split}
	&\Vert c_{1}-c_{2} + (R(\xi_{2})\bar{A}_{2}^{-1}-R(\xi_{1})\bar{A}_{1}^{-1})\bm{1}r \\
	&\quad \quad + (R(\xi_{1})\bar{A}_{1}^{-1}\bar{b}_{1}-R(\xi_{2})\bar{A}_{2}^{-1}\bar{b}_{2}) \alpha \Vert_{2} = 2r.
	\end{split}
\end{equation}
This is a quadratic equation for the scalar $\alpha$ that, due to the geometric interpretation of the problem, uniquely fixes $\alpha$. In particular, the solution of (\ref{quadalphaeq}) with larger value violates the constraints (\ref{KKTsystem:h})-(\ref{KKTsystem:g}). Consequently, we derived unique expressions for the optimal primal variables. Finally, for the dual variables, we observe that (\ref{KKTsystem:c}), (\ref{KKTsystem:d}) and (\ref{KKTsystem:f}) imply
\begin{equation} \label{eqofmuc}
\mu_{1, \mathrm{c}} = \mu_{2, \mathrm{c}}.
\end{equation}
Solving (\ref{KKTsystem:g}) and (\ref{KKTsystem:h}) for $\bar{\mu}_{1,\mathrm{p}}$ and $\bar{\mu}_{2,\mathrm{p}}$, respectively, yields
\begin{subequations} \label{mupeq}
\begin{align}
\bar{\mu}_{1,\mathrm{p}} &= 2(\bar{A}_{1}^{\top})^{-1}R( \xi_{1})^{\top}(p-y_{1}) \mu_{1, \mathrm{c}}, \\
 \bar{\mu}_{2,\mathrm{p}} &= 2(\bar{A}_{2}^{\top})^{-1}R( \xi_{2})^{\top}(p-y_{2}) \mu_{2, \mathrm{c}}.
\end{align}
\end{subequations}
Substituting the above in (\ref{KKTsystem:e}) and using (\ref{KKTsystem:f}) yields
\begin{equation*}
1 -2(\bar{b}_{1}^{\top} (\bar{A}_{1}^{\top})^{-1}R( \xi_{1})^{\top}-\bar{b}_{2}^{\top}(\bar{A}_{2}^{\top})^{-1}R( \xi_{2})^{\top})(p-y_{1}) \mu_{1, \mathrm{c}},
\end{equation*}
which lets us solve for $\mu_{1, \mathrm{c}}$ since primal variables already have been uniquely determined. Consequently, due to (\ref{eqofmuc}) we can solve for $\mu_{2, \mathrm{c}}$, and due to (\ref{mupeq}) for $\mu_{1, \mathrm{p}}$ and $\mu_{2, \mathrm{p}}$.

Case 2:  Here we consider $\vert \mathcal{A}_{1,\mathrm{p}} \vert = 2$ and $\vert \mathcal{A}_{2,\mathrm{p}} \vert = 1$. We redefine
\begin{equation*}
\bar{A}_{2} = A_{2,k_{2,1}}, \quad
\bar{b}_{2} = b_{2,k_{2,1}}, \quad
\bar{\mu}_{2,\mathrm{p}} = \mu_{2, \mathrm{p},k_{2,1}},
\end{equation*}
i.e., here $\bar{A}_{2} \in  \mathbb{R}^{1 \times 2}$ is a row vector. Elaborate discussion can then be carried out analogously to Case 1. Here, (\ref{KKTsystem:b}) does not directly determine $y_{2}$ for a given $\alpha$ as it is only one scalar equation. However, $y_{1}$ is again uniquely determined by (\ref{KKTsystem:a}) and this time, (\ref{KKTsystem:h}) fixes the slope of the straight line passing through $y_{1}$, $y_{2}$, $p$. Given unique primal variables, derivation of the dual variables follows similarly as in \mbox{Case 1}.
The vice versa case with $\vert \mathcal{A}_{1,\mathrm{p}} \vert = 1$ and $\vert \mathcal{A}_{2,\mathrm{p}} \vert = 2$ follows analogously.

Case 3: This is the case of possible patch contact with $\vert \mathcal{A}_{1,\mathrm{p}} \vert = 1$ and $\vert \mathcal{A}_{2,\mathrm{p}} \vert = 1$. Here we have
\begin{alignat*}{2}
	&\bar{A}_{1} = A_{1,k_{1,1}}, &&\bar{A}_{2} = A_{2,k_{2,1}}, \\
	&\bar{b}_{1} = b_{1,k_{1,1}},  &&\bar{b}_{2} = b_{2,k_{2,1}}, \\
	&\bar{\mu}_{1,\mathrm{p}} = \mu_{1, \mathrm{p},k_{1,1}},  \quad &&\bar{\mu}_{2,\mathrm{p}} = \mu_{2, \mathrm{p},k_{2,1}},
\end{alignat*}
where now $\bar{A}_{1} \in \mathbb{R}^{1\times2}$ and $\bar{A}_{2}\in \mathbb{R}^{1\times2}$ are row vectors. In this case, (\ref{KKTsystem:g}) and (\ref{KKTsystem:h}) imply that the vectors $p-y_{i}$ are positive multiples of the vectors $R( \xi_{i})\bar{A}_{i}^{\top}$, for $i =1,2,$ respectively. Furthermore, (\ref{KKTsystem:c}) and (\ref{KKTsystem:d}) imply that \mbox{$p-y_{i}$} have an Euclidean norm equal to $r$. Therefore, we can equivalently formulate the system (\ref{KKTsystem:e})-(\ref{KKTsystem:h}) regrading the dual variables as
\begin{subequations} \label{refdualsystem}
\begin{align}
		1 -\bar{b}_{1}^{\top} \bar{\mu}_{1,\mathrm{p}} -\bar{b}_{2}^{\top} \bar{\mu}_{2,\mathrm{p}} &= 0, \label{refdualsystem:e}\\
2rR( \xi_{1})\bar{A}_{1}^{\top}\mu_{1, \mathrm{c}} &= - 2rR( \xi_{2})\bar{A}_{2}^{\top}\mu_{2, \mathrm{c}},  \label{refdualsystem:f}\\
R( \xi_{1})\bar{A}_{1}^{\top}\bar{\mu}_{1,\mathrm{p}} &=  2rR( \xi_{1})\bar{A}_{1}^{\top}  \mu_{1, \mathrm{c}}, \label{refdualsystem:g}\\
R( \xi_{2})\bar{A}_{2}^{\top}\bar{\mu}_{2,\mathrm{p}} &=  2rR( \xi_{2})\bar{A}_{2}^{\top}  \mu_{2, \mathrm{c}},\label{refdualsystem:h}
\end{align}
\end{subequations}
where we used that $A_{1}$ and $A_{2}$ are assumed to be constructed such that their rows are vectors with unitary norm. System (\ref{refdualsystem}) can be restated as
\begin{subequations} \label{refdualsystemcondensed}
	\begin{align}
		1 -\bar{b}_{1}^{\top} \bar{\mu}_{1,\mathrm{p}} -\bar{b}_{2}^{\top} \bar{\mu}_{2,\mathrm{p}} &= 0, \\
		\mu_{1, \mathrm{c}} &=  \mu_{2, \mathrm{c}},  \\
	\bar{\mu}_{1,\mathrm{p}} &=  2r \mu_{1, \mathrm{c}}, \\
		\bar{\mu}_{2,\mathrm{p}} &=  2r \mu_{2, \mathrm{c}},
	\end{align}
\end{subequations}
which is a linear system of four scalar equations for four scalar variables with a unique solution.

It remains to show that the dual variables $\mu$ always depend continuously on the system configuration $q$. This follows directly from the result in \cite{semple} which states that if for a convex parametric program, Slater's constraint qualification applies, the set of primal solutions is bounded for a certain parameter and dual variables are unique in a neighborhood of this parameter, then the Lagrange multipliers behave continuously. Since we already inferred that Lagrange multipliers are unique for any $q$, Slater's constraint qualification applies and primal optimal solutions are naturally bounded, even for the case of patch contacts, we conclude that the Lagrange multipliers depend continuously on $q$.

\subsection{Proof of Proposition \ref{propunique}} \label{app:proof2}
As explained in the main part of the paper, the goal is to derive smooth expressions for the algebraic variables of the CLS (\ref{cls}) in terms of the differential variables and the control input. Consequently, an ODE with a smooth right-hand side is obtained that admits to unique solutions due to the Picard-Lindelöf theorem. To this end, we again consider the active sets for the constraints (\ref{distprob:b}) and (\ref{distprob:c}) given by
\begin{align*}
	&\mathcal{A}_{1,\mathrm{p}} = \{i \ \vert \ \mu_{1,\mathrm{p},i} > 0\}, \\
	&\mathcal{A}_{2,\mathrm{p}} = \{i \ \vert \ \mu_{2,\mathrm{p},i} > 0\},
\end{align*}
We distinguish between two cases.

Case 1: Let $q(t)$ solve (\ref{contactDAE}) on $(t_{1},t_{2})$ such that $\vert \mathcal{A}_{1,\mathrm{p}} \vert = 2$ or $\vert \mathcal{A}_{2,\mathrm{p}} \vert = 2$. Then there exist unique ($z, \mu$) solving (\ref{distprob}), i.e., the SDF in classically differentiable. In this case, one can discuss the contact DAE similarly as done in \cite{tflag}, where a CLS with state jumps is discussed for the case of differentiable SDF.

Case 2: Let $q(t)$ solve (\ref{contactDAE}) on $(t_{1},t_{2})$ such that $\vert \mathcal{A}_{1,\mathrm{p}} \vert = 1$ and $\vert \mathcal{A}_{2,\mathrm{p}} \vert = 1$. Let 
\begin{alignat*}{2}
&\mathcal{A}_{1,\mathrm{p}} = \{k_{1}\}, &&\mathcal{A}_{2,\mathrm{p}} = \{k_{2}\}, \\
&k_{1} \in \{1,\dots,\nha\}, \quad &&k_{2} \in \{1,\dots,\nhb\}, \\
&a_{1} = A^{\top}_{1,k_{1}}, &&a_{2} = A^{\top}_{2,k_{2}}.
\end{alignat*}
Then the KKT conditions (\ref{statKKTcond}) imply
\begin{equation*}
	R(\theta_{1})a_{1} \mu_{1,\mathrm{p},k_{1}} + R(\theta_{2})a_{2} \mu_{2,\mathrm{p},k_{2}} = 0.
\end{equation*}
Since the the rows of $A_{1}, A_{2}$, are assumed to be unit vectors, the above equation reduces to 
\begin{equation} \label{k1}
	R(\theta_{1})a_{1} + R(\theta_{2})a_{2}  = 0.
\end{equation}
Differentiating (\ref{k1}) with respect to time yields
\begin{equation}\label{k2}
	\begin{split}
		0 &= \ddt	(R(\theta_{1})a_{1} + R(\theta_{2})a_{2}) \\
		&= R\left(\theta_{1} + \frac{\pi}{2}\right)a_{1} \omega_{1} + R\left(\theta_{2}+ \frac{\pi}{2}\right)a_{2}\omega_{2}.
	\end{split}
\end{equation}
By substituting (\ref{k1}) in (\ref{k2}), we obtain
\begin{equation*} 
	\begin{split}
		&0 = R\left(\frac{\pi}{2}\right) (R(\theta_{1})a_{1} \omega_{1} - R(\theta_{1})a_{1} \omega_{2}) \\
		\Leftrightarrow \quad&0= \omega_{1}-\omega_{2}
	\end{split}
\end{equation*}
and, consequently, 
\begin{equation*} 
	\begin{split}
		0 &= \dot{\omega}_{1}-\dot{\omega}_{2}.
	\end{split}
\end{equation*}
From the right-hand side of (\ref{cls:b}), we can then infer that it has to hold
\begin{alignat}{2}
	&  \phantom{{}\Leftrightarrow \, p \times \ntr \, \, {}}0 &&=M^{-1}_{\omega_{1}}(f_{\mathrm{v},\omega_{1}}(q,v,u) + (p-c_{1}) \times \ntr \lamn)  \nonumber \\
	& && \phantom{{}={}}- M^{-1}_{\omega_{2}}(f_{\mathrm{v},\omega_{2}}(q,v,u) - (p-c_{2}) \times \ntr\lamn) \nonumber \\
	&\Leftrightarrow \, \phantom{{}p \times \ntr \, \, {}} 0 &&= (M^{-1}_{\omega_{1}} + M^{-1}_{\omega_{2}})p \times \ntr \lamn \nonumber \\
	& && \phantom{{}={}}- C_{\mathrm{n}} \lamn +  R_{1}(q,v,u) \nonumber \\
	&\Leftrightarrow \, p \times \ntr \lamn &&= - \frac{R_{1}(q,v,u)}{M^{-1}_{\omega_{1}} + M^{-1}_{\omega_{2}}} +\frac{C_{\mathrm{n}} \lamn}{M^{-1}_{\omega_{1}} + M^{-1}_{\omega_{2}}}, \label{rhseqcond}
\end{alignat}
with
\begin{equation*}
C_{\mathrm{n}} = M^{-1}_{\omega_{1}} c_{1} \times \ntr + M^{-1}_{\omega_{2}} c_{2} \times \ntr.
\end{equation*}
The system velocities are captured in the vector \mbox{$\nu = (v_{1,\mathrm{x}},v_{1,\mathrm{y}},\omega_{1},v_{2,\mathrm{x}},v_{2,\mathrm{y}},\omega_{2})\in \mathbb{R}^{6}$} and \mbox{$M_{\omega_{1}}$, $M_{\omega_{2}}$, $f_{\mathrm{v},\omega_{1}}$, $f_{\mathrm{v},\omega_{2}}$}, denote components of the inertia matrix and the nominal dynamics influencing the respective elements of the system velocity. We used that $M$ is assumed to be diagonal. Furthermore, $R_{1}(q,v,u)$ denotes a smooth remainder term given by
\begin{equation*} 
	R_{1}(q,v,u) = M^{-1}_{\omega_{1}}f_{\mathrm{v},\omega_{1}}(q,v,u) -M^{-1}_{\omega_{2}}f_{\mathrm{v},\omega_{2}}(q,v,u).
\end{equation*}
Under the assumption of lasting patch contact, we have \mbox{$\dot{\mu} =0$}, cf. Case 3 in the proof of Proposition \ref{uniquepdvar}. It then follows 
\begin{equation}
	\dntr = R\left( \frac{\pi}{2}\right) R(\theta_{1})a_{1}\omega_{1}.
\end{equation}
We deduce
\begin{align}
	\dot{n}^{\top}v &= \begin{pmatrix}
		\dntr \\ 
		(\dot{p}-v_{1}) \times \ntr + (p-c_{1}) \times \dntr \\
		-\dntr \\ 
		-(\dot{p}-v_{2}) \times \ntr - (p-c_{2}) \times \dntr
	\end{pmatrix}^{\top}v \\
	&= R_{2}(q,v) + (\dot{p}\times \ntr + p\times \dntr)(\omega_{1}-\omega_{2}) \\
	&= R_{2}(q,v),
\end{align}
with 
\begin{equation*}
	R_{2}(q,v) = \begin{pmatrix}
		\dntr \\ 
		-v_{1} \times \ntr + c_{1} \times \dntr \\
		-\dntr \\ 
		v_{2} \times \ntr - c_{2} \times \dntr
	\end{pmatrix}^{\top}v.
\end{equation*}
Now let
\begin{equation*}
	n = n_{\mathrm{c}} + n_{\mathrm{p}} = \begin{pmatrix}
		\ntr \\ 
		-c_{1} \times \ntr \\
		-\ntr \\ 
		c_{2} \times \ntr
	\end{pmatrix} + \begin{pmatrix}
		0 \\ 
		p \times \ntr \\
		0 \\ 
		-p \times \ntr
	\end{pmatrix}.
\end{equation*}
Differentiating the contact normal with respect to time yields
\begin{align}
	0 &=\ddt	n^{\top}v \nonumber \\
	&= \dot{n}^{\top}v + n^{\top}\dot{v} \nonumber \\
	&=R_{2}(q,v) + n^{\top}(M^{-1}(f_{\mathrm{v}} + n\lamn)) \nonumber \\
	& = R_{2}(q,v) + (n_{\mathrm{c}} + n_{\mathrm{p}})^{\top}(M^{-1}(f_{\mathrm{v}} + (n_{\mathrm{c}} + n_{\mathrm{p}})\lamn))\nonumber  \\
	& = R_{2}(q,v) + n_{\mathrm{c}}^{\top}(M^{-1}(f_{\mathrm{v}} + (n_{\mathrm{c}} + n_{\mathrm{p}})\lamn))\nonumber \\
	& =R_{3}(q,v,u) + n_{\mathrm{c}}^{\top}M^{-1}n_{\mathrm{c}}\lamn + n_{\mathrm{c}}^{\top}M^{-1}n_{\mathrm{p}}\lamn\nonumber \\
	& =R_{3}(q,v,u) + n_{\mathrm{c}}^{\top}M^{-1}n_{\mathrm{c}}\lamn 
-C_{\mathrm{n}} p \times \ntr \lamn, \label{dnvcond}
\end{align}
with
\begin{equation*}
	R_{3}(q,v,u) = R_{2}(q,v) + n_{\mathrm{c}}^{\top}M^{-1}f_{\mathrm{v}}.
\end{equation*}
By substituting (\ref{rhseqcond}) into (\ref{dnvcond}), one obtains
\begin{align}
	0 &= R_{3}(q,v,u) + n_{\mathrm{c}}^{\top}M^{-1}n_{\mathrm{c}}\lamn \nonumber \nonumber\\
	&\quad -C_{\mathrm{n}} \left(\frac{C_{\mathrm{n}} \lamn - R_{1}(q,v,u)}{M^{-1}_{\omega_{1}} + M^{-1}_{\omega_{2}}} \right) \nonumber \\
	\Leftrightarrow \, 0 &= R_{4}(q,v,u) + \left(n_{\mathrm{c}}^{\top}M^{-1}n_{\mathrm{c}} - \frac{C_{\mathrm{n}}^{2}}{M^{-1}_{\omega_{1}} + M^{-1}_{\omega_{2}}} \right) \lamn,
\end{align}
with
\begin{equation*}
	R_{4}(q,v,u) = R_{3}(q,v,u) + \frac{C_{\mathrm{n}}}{M^{-1}_{\omega_{1}} + M^{-1}_{\omega_{2}}} R_{1}(q,v,u).
\end{equation*}
If it holds
\begin{equation} \label{ineqproof}
	\left(n_{\mathrm{c}}^{\top}M^{-1}n_{\mathrm{c}} - \frac{C_{\mathrm{n}}^{2}}{M^{-1}_{\omega_{1}} + M^{-1}_{\omega_{2}}} \right) > 0,
\end{equation}
an analytic expression for $\lamn$ is given by
\begin{equation} \label{lamnexplicit}
	\lamn  = -\left(n_{\mathrm{c}}^{\top}M^{-1}n_{\mathrm{c}} - \frac{C_{\mathrm{n}}^{2}}{M^{-1}_{\omega_{1}} + M^{-1}_{\omega_{2}}} \right)^{-1}  R_{4}(q,v,u),
\end{equation}
which is arbitrarily smooth in the states $q,v$ and the input $u$. \\
We now additionally use, that $p$ has to be an element of the hyperplane induced by the active halfspace constraints. I.e., it holds
\begin{equation*}
	\ntr^{\top}p = b_{1,k_{1}}.
\end{equation*}
Together with (\ref{rhseqcond}) we arrive at the explicit representation
\begin{equation} \label{explprep}
	p = \begin{pmatrix}
		\ntr^{\top} \\
		[\ntr ]_{\times}^{\top}
	\end{pmatrix}^{-1} \begin{pmatrix}
		b_{1,k_{1}} \\
		\frac{C_{\mathrm{n}}}{M^{-1}_{\omega_{1}} + M^{-1}_{\omega_{2}}}  - \frac{ R_{1}(q,v,u)}{(M^{-1}_{\omega_{1}} + M^{-1}_{\omega_{2}})\lamn} 
	\end{pmatrix},
\end{equation}
where we define $[\ntr ]_{\times} = (n_{\mathrm{tr},2},-n_{\mathrm{tr},1})^{\top}$ as then it follows $p \times \ntr = [\ntr ]_{\times}^{\top}p$. The right-hand side of (\ref{explprep}) is well-defined since we have assumed $\lamn > 0$ and since $\lamn$ admits a smooth expression in the states $q,v$ and the input $u$, so does the right-hand side of the above system for $p$. 

Therefore, we can substitute the derived explicit representations for primal and dual optimal variables of the distance problem as well as the force magnitude into the differential equations (\ref{cls:a})-(\ref{cls:b}). Uniqueness then follows as in Case 1 as we derived a pure ODE by index reduction with arbitrarily smooth right-hand side for which such a solution exists by the Picard-Lindelöf theorem.

It remains to show (\ref{ineqproof}). To this end, let
\begin{alignat*}{2}
	&s_{1} =  c_{1} \times \ntr, \quad &&s_{2} =  c_{2} \times \ntr, \\
	&m_{1} =  M^{-1}_{\omega_{1}}, &&m_{2} =  M^{-1}_{\omega_{2}}.
\end{alignat*}
By expanding all terms, we deduce
\begin{align*}
	n_{\mathrm{c}}^{\top}&M^{-1}n_{\mathrm{c}} - \frac{C_{\mathrm{n}}^{2}}{M^{-1}_{\omega_{1}} + M^{-1}_{\omega_{2}}} \\
	&= \ntr^{\top}M^{-1}_{v_{1}}\ntr + \ntr^{\top}M^{-1}_{v_{2}}\ntr + m_{1} s_{1}^{2} + m_{2} s_{2}^{2} \\
	&\phantom{{}={}}- \frac{m_{1}^{2}  s_{1}^{2} +m_{2}^{2}  s_{2}^{2} + 2m_{1}m_{2} s_{1}s_{2}  }{m_{1}+ m_{2}} \\
	& > m_{1} s_{1}^{2} + m_{2} s_{2}^{2} - \frac{m_{1}^{2}  s_{1}^{2} +m_{2}^{2}  s_{2}^{2} + 2m_{1}m_{2} s_{1}s_{2}  }{m_{1}+ m_{2}} \\
	&= \left(1 - \frac{m_{1} + m_{2} - m_{2}}{m_{1} + m_{2}}\right)m_{1}s_{1}^{2} \\
	&\phantom{{}={}}+ \left(1 - \frac{m_{2} + m_{1} - m_{1}}{m_{1} + m_{2}}\right)m_{2}s_{2}^{2} - \frac{2m_{1}m_{2} s_{1}s_{2}  }{m_{1}+ m_{2}} \\
	&=  \frac{m_{1}m_{2}}{m_{1} + m_{2}}s_{1}^{2} + \frac{m_{1}m_{2}}{m_{1} + m_{2}}s_{2}^{2} - \frac{m_{1}m_{2}}{m_{1} + m_{2}} 2s_{1}s_{2} \\
	&= \frac{m_{1}m_{2}}{m_{1} + m_{2}}( s_{1} - s_{2} )^{2} \\
	&\geq 0.
\end{align*}

\bibliographystyle{ieeetr}
\bibliography{Literature}

\end{document}